\numberwithin{equation}{section} %\usepackage{amssymb}
\newtheorem{thm}{Theorem}[section]
\newtheorem{cor}[thm]{Corollary}
\newtheorem{lem}[thm]{Lemma}
\newtheorem{prop}[thm]{Proposition}
\theoremstyle{definition}
\newtheorem{defn}[thm]{Definition}
\theoremstyle{remark}
\newtheorem{rem}[thm]{Remark}
\newcommand{\bea}{\begin{eqnarray}}
\newcommand{\eea}{\end{eqnarray}}
\newcommand{\ba}{\begin{array}}
\newcommand{\ea}{\end{array}}
\newcommand{\bc}{\begin{center}}
\newcommand{\ec}{\end{center}}
\newcommand{\be}{\begin{equation}}
\newcommand{\ee}{\end{equation}}
\def\bn{{\mathbb N}}
\def\s{\sigma}
\def\id{{\bf 1}\!\!{\rm I}}
\def\i{\varepsilon}
\def\t{\tau}
\def\cf{{\mathcal F}}
\def\ch{{\mathcal H}}
\def\w{\omega}
\def\m{\mu}
\def\g{\gamma}
\def\G{\Gamma}
\def\mb{{\mathbf{m}}}
\def\nb{{\mathbf{n}}}
\def\kb{{\mathbf{k}}}
\def\Tb{{\mathbf{T}}}
\begin{document}

\title[Zero-Two Law]
{On a generalized uniform zero-two law for positive contractions of
non-commutative $L_1$-spaces and its vector-valued extension}

\author{Inomjon Ganiev}
\address{Inomjn Ganiev\\
 Department of Computational \& Theoretical Sciences\\
Faculty of Science, International Islamic University Malaysia\\
P.O. Box, 141, 25710, Kuantan\\
Pahang, Malaysia} \email{ {\tt inam@iium.edu.my}}

\author{Farrukh Mukhamedov}
\address{Farrukh Mukhamedov\\
 Department of Computational \& Theoretical Sciences\\
Faculty of Science, International Islamic University Malaysia\\
P.O. Box, 141, 25710, Kuantan\\
Pahang, Malaysia} \email{{\tt far75m@yandex.ru} {\tt
farrukh\_m@iium.edu.my}}

\author{Dilmurod Bekbaev}
\address{Dilmurod Bekbaev\\
 Department of Computational \& Theoretical Sciences\\
Faculty of Science, International Islamic University Malaysia\\
P.O. Box, 141, 25710, Kuantan\\
Pahang, Malaysia}

\begin{abstract}  First, Ornstein and Sucheston proved that for a given positive contraction
$T:L_1\to L_1$ there exists $m\in\bn$ such that
$\big\|T^{m+1}-T^m\|<2$ then
$$
\lim_{n\to\infty}\|T^{n+1}-T^n\|=0.
$$
Such a result was labeled as "zero-two" law. In the present paper,
we prove a generalized uniform "zero-two" law for multi-parametric
family of positive contractions of the non-commutative $L_1$-spaces.
Moreover, we also establish  a vector-valued analogous of the
uniform "zero-two" law for positive contractions of $L_1(M,\Phi)$--
the non-commutative $L_1$-spaces associated with center valued
trace.

\vskip 0.3cm \noindent

{\it Keywords:} zero-two law, positive contraction, bundle; non-commutative\\

{\it AMS Subject Classification:} 47A35, 17C65, 46L70, 46L52, 28D05.
\end{abstract}

\maketitle

\section{Introduction}

Let $(X,\cf,\m)$ be a measure space with a positive $\s$-additive
measure $\m$ and let $L_1(X,\cf,\m)$ be the usual associated real
$L_1$-space. A linear operator $T:L_1(X,\cf,\m)\to L_1(X,\cf,\m)$
is called a {\it positive contraction} if  $Tf\geq 0$ whenever
$f\geq 0$ and $\|T\|\leq 1$.

Jamison and Orey \cite{JO} proved that if $P$ is a Markov operator
recurrent in the sense of Harris, with $\s$-finite invariant measure
$\m$, then $\|P^ng\|_1\to 0$ for every $g\in L^1$ with $\int g\
d\m=0$ if (and only if) the chain is aperiodic. Clearly, when the
chain is not aperiodic, taking $f$ with positive and negative parts
supported in different sets of the cyclic decomposition, we have
$\lim_{n\to\infty}\|P^nf\|_1=2\|f\|_1$.

Ornstein and Sucheston \cite{OS} obtained an analytic proof of the
Jamison-Orey result, and in their work they proved the following
theorem \cite[Theorem 1.1]{OS}.

\begin{thm}\label{0-2-s} Let $T:L_11\to
L_1$ be a positive contraction. Then either
\begin{equation}\label{0}
\sup_{\|f\|_1\leq 1}\lim_{n\to\infty}\|T^{n+1}f-T^nf\|=2.
\end{equation}
or $\|T^{n+1}f-T^nf\|\to 0$ for every $f\in L^1$.
\end{thm}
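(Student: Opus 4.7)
For each $f \in L_1$ the sequence $a_n(f) := \|T^{n+1}f - T^n f\|_1$ is non-increasing, because $T$ is a contraction:
$$ a_{n+1}(f) = \|T(T^{n+1}f - T^n f)\|_1 \leq a_n(f). $$
So $\phi(f) := \lim_n a_n(f)$ exists and defines a seminorm on $L_1$ satisfying $\phi(f) \leq 2\|f\|_1$. Set $M := \sup_{\|f\|_1 \leq 1} \phi(f) \in [0, 2]$. The theorem asserts the dichotomy $M \in \{0, 2\}$, and the only content is the implication $M < 2 \Rightarrow M = 0$.

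Since $\phi$ is a seminorm and every $f \in L_1$ decomposes as $f = f^+ - f^-$ with $f^\pm \geq 0$, it suffices to prove $\phi(f) = 0$ for $f \geq 0$. For such $f$, the elementary lattice identity $\|a - b\|_1 = \|a\|_1 + \|b\|_1 - 2\|a \wedge b\|_1$ (valid for $a, b \geq 0$), applied to $a = T^{n+1}f$ and $b = T^n f$, yields
$$ a_n(f) = \|T^{n+1}f\|_1 + \|T^n f\|_1 - 2 \|T^{n+1}f \wedge T^n f\|_1. $$
Since $\|T^n f\|_1$ is non-increasing, it converges to some $\ell(f) \in [0, \|f\|_1]$, so
$$ \phi(f) = 2\ell(f) - 2 \lim_n \|T^{n+1}f \wedge T^n f\|_1. $$

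The heart of the proof --- and the main obstacle --- is to deduce from $M < 2$ that $\lim_n \|T^{n+1}f \wedge T^n f\|_1 = \ell(f)$, which gives $\phi(f) = 0$. Positivity of $T$ supplies the decisive compatibility: from $u_n := T^{n+1}f \wedge T^n f \leq T^n f$ one gets $T u_n \leq T^{n+1}f$, and from $u_n \leq T^{n+1}f$ one gets $T u_n \leq T^{n+2}f$, so $T u_n \leq u_{n+1}$. This means the ``common cores'' $u_n$ do not degrade under $T$. The delicate task is then to translate a failure of $\lim \|u_n\|_1 = \ell(f)$ into the existence of some positive $g$ with $\phi(g) = 2\|g\|_1$, contradicting $M < 2$. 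Concretely, one extracts a suitable sub-sequence of defects $r_n := T^n f - u_n \geq 0$ and, after renormalization and further iteration by $T$, produces an element on which $\phi$ achieves the forbidden value $2\|\cdot\|_1$. This iterative argument --- combining the monotonicity of $(a_n)$, repeated use of the lattice identity, and a diagonal/extraction step --- is the genuinely technical part of the Ornstein--Sucheston proof. Once it is carried out, $\phi$ vanishes on the positive cone, hence by subadditivity on all of $L_1$, and the second alternative in the statement follows.
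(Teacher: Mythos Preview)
The paper does not contain a proof of this theorem: Theorem~\ref{0-2-s} is quoted in the introduction as the Ornstein--Sucheston result \cite{OS}, with no proof supplied, so there is nothing in the paper to compare your argument against.

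Evaluating your proposal on its own terms: the scaffolding is sound (monotonicity of $a_n(f)$, the seminorm $\phi$, the reduction to $f\geq 0$, the lattice identity, and the observation $Tu_n\leq u_{n+1}$), but you have not actually given a proof. The entire content of the theorem lies in the step you describe only by the phrases ``the delicate task is then to translate\ldots'' and ``once it is carried out''; you do not carry it out. In particular, you do not explain how a gap $\ell(f)-\lim_n\|u_n\|_1>0$ produces a normalized positive $g$ with $\phi(g)=2\|g\|_1$. The defects $r_n=T^nf-u_n$ satisfy $Tr_n\geq T^{n+1}f-u_{n+1}=r_{n+1}-(T^{n+2}f-T^{n+1}f)^{-}$, so controlling them under iteration is not automatic, and the extraction you allude to requires a genuine argument (in Ornstein--Sucheston this is done via a careful analysis of the ``deterministic'' and ``dissipative'' parts, not by a one-line diagonalization). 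As written, your text is an accurate outline of the strategy but leaves the decisive implication unproved.
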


This result was later called a \textit{strong zero-two law}.
Consequently, \cite[Theorem 1.3]{OS}, if $T$ is ergodic with
$T^*\mathbf{1}=\mathbf{1}$ (e.g. $T$ is ergodic and conservative),
then either \eqref{0} holds, or $\|T^ng\|_1\to 0$ for every $g\in
L^1$ with $\int g\ d\m=0$. Some extensions of the strong zero-two
law can be found in \cite{AB,W0}.

Interchanging "sup" and "lim" in the strong zero-two law we have the
following \textit{uniform zero-two law}, proved by Foguel \cite{F}
using ideas of \cite{OS} and \cite{F1}.

 \begin{thm}\label{0-2} Let $T:L_1\to
L_1$ be a positive contraction.  If for some $m\in\bn\cup\{0\}$ one
has $\|T^{m+1}-T^m\|<2$, then
$$
\lim\limits_{n\to\infty}\|T^{n+1}-T^n\|=0.
$$
\end{thm}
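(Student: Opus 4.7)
The plan is to reduce the theorem to showing that the limit $\alpha := \lim_{n\to\infty}\|T^{n+1}-T^n\|$ can only take the values $0$ or $2$. First I would observe that $\gamma_n := \|T^{n+1}-T^n\|$ is non-increasing: indeed $T^{n+2}-T^{n+1} = T(T^{n+1}-T^n)$ together with $\|T\|\le 1$ gives $\gamma_{n+1}\le\gamma_n$, so the limit $\alpha$ exists in $[0,2]$. The hypothesis $\gamma_m<2$ forces $\alpha<2$, and the task reduces to excluding the case $0<\alpha<2$.

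The key analytic tool is the lattice identity $\|g-h\|_1 = \|g\|_1+\|h\|_1-2\|g\wedge h\|_1$, valid for $0\le g,h\in L_1$ since $|g-h|=g+h-2(g\wedge h)$. Applied with $g = T^m f$ and $h = T^{m+1}f$ for a non-negative $f$ with $\|f\|_1=1$, the bound $\gamma_m<2$ translates into a quantitative lower bound on the common overlap $\|T^m f\wedge T^{m+1}f\|_1$, so that the consecutive iterates $T^m f$ and $T^{m+1}f$ cannot be "almost mutually singular" uniformly in $f$.

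Next I would exploit positivity: setting $u:=T^m f\wedge T^{m+1}f$, one has $T^k u\le T^{m+k}f\wedge T^{m+k+1}f$ for every $k\ge 0$, so the overlap can only grow (as a fraction of the ambient mass) under further iteration. Combined with a doubling/squaring argument in the spirit of Ornstein--Sucheston --- iterating the monotonicity $\gamma_{n+k}\le\gamma_n$ together with the overlap propagation --- one aims to show that along a subsequence the overlap exhausts essentially all of the mass of $T^n f$, which via the lattice identity above forces $\gamma_n\to 0$.

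The main obstacle is passing from a pointwise/test-function statement to a uniform operator-norm conclusion. A natural path is via the Hopf decomposition: on the dissipative part $\|T^n g\|_1\to 0$ for every $g$, so $\gamma_n\to 0$ there automatically; on the conservative part the condition $\gamma_m<2$ rules out non-trivial periodicity in the sense of Jamison--Orey, and the analytic argument of Ornstein--Sucheston already cited in the introduction gives $\gamma_n\to 0$ on each ergodic component. Gluing the two contributions and passing to the supremum over the unit ball of $L_1$ by a density/approximation argument should upgrade the pointwise decay to the desired uniform estimate $\|T^{n+1}-T^n\|\to 0$.
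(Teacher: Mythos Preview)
Your proposal has a genuine gap at the last step. The overlap argument you sketch --- the lattice identity, the lower bound on $\|T^m f\wedge T^{m+1}f\|_1$, and the propagation $T^k u \le T^{m+k}f\wedge T^{m+k+1}f$ --- is correct and is precisely the engine behind the \emph{strong} zero-two law (Theorem~\ref{0-2-s}), yielding $\|(T^{n+1}-T^n)f\|_1\to 0$ for each fixed $f$. But you then attempt to upgrade this to the operator-norm statement by splitting into Hopf pieces, invoking Ornstein--Sucheston on each ergodic component, and ``gluing'' via ``a density/approximation argument''. This does not work: convergence $\|(T^{n+1}-T^n)f\|_1\to 0$ for every $f$ in no way implies $\sup_{\|f\|\le 1}\|(T^{n+1}-T^n)f\|_1\to 0$, and no density argument bridges that gap --- the supremum and the limit do not commute. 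The uniform law is strictly stronger than the strong one, which is exactly why Foguel's and Zaharopol's proofs need ideas beyond Ornstein--Sucheston. Your Hopf step has the same defect: even granting pointwise decay on the dissipative part and on each conservative ergodic component, there is no mechanism to make the decay uniform across components or across the unit ball.

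The paper's route, following Zaharopol, stays entirely at the operator-norm level. From $\|T^{m+1}-T^m\|<2$ one first extracts a positive contraction $S$ dominated by both $T^m$ and $T^{m+1}$ with $\|T^m-S\|<1$ and $\|T^{m+1}-S\|<1$ (in the commutative case $S=T^m\wedge T^{m+1}$; this is (i)$\Rightarrow$(ii) of Theorem~\ref{B}). The heart of the argument, carried out here as Theorem~\ref{1-2} and Corollary~\ref{1-3}, combines three operator-norm ingredients: the binomial estimate $\bigl\|\bigl(\tfrac{I+T}{2}\bigr)^{\ell}(I-T)\bigr\|\le \gamma/\sqrt{\ell}$; the persistence lemma (Theorem~\ref{ZN0}, Corollary~\ref{ZN1}) giving $\|(T^m)^{\ell}-S^{\ell}\|<1$ for all $\ell$; and the decomposition $T^{\ell(m+1)}=\bigl(\tfrac{I+T}{2}\bigr)^{\ell}S^{\ell}+Q_{\ell}$ with $\|Q_{\ell}\|<1$ uniformly in $\ell$, whence $\|Q_{\ell}^{d}\|\to 0$ as $d\to\infty$. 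These bound $\|T^{n+1}-T^n\|$ directly, with no detour through individual test vectors.
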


Zahoropol \cite{Z} has provided another proof of Theorem \ref{0-2}
which is given in the following theorem.

\begin{thm}\label{B}\cite{Z}  Let $T:L_1\to L_1$ be a positive contraction.
Then for the following statements:
\begin{enumerate}
\item[(i)] there is some $m\in\bn$ such that $\|T^{m+1}-T^m\|<2$;
% then $$\|T^{m+1}-(T^{m+1}\wedge T^m)\|<1;$$
\item[(ii)] there is some $m\in\bn$ such that
$\|T^{m+1}-(T^{m+1}\wedge T^m)\|<1$;
 \item[(iii)] one has
$$
\lim_{n\to\infty}\|T^{n+1}-T^n\|=0.
$$
\end{enumerate}
the implications hold: (i)$\Rightarrow$(ii)$\Rightarrow$ (iii).
\end{thm}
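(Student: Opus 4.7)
The plan is to exploit the lattice decomposition of $T^{n+1}-T^n$ in the space of regular operators on $L_1$ and to use the AL-space structure to link (i) and (ii), with Theorem~\ref{0-2} serving as the bridge to (iii). For each $n$, set
\[
S_n:=T^{n+1}\wedge T^n,\qquad A_n:=T^{n+1}-S_n,\qquad B_n:=T^n-S_n.
\]
These are positive regular operators satisfying $T^{n+1}-T^n=A_n-B_n$. Rewriting $A_n=T^{n+1}\vee T^n-T^n$ and $B_n=T^{n+1}\vee T^n-T^{n+1}$ shows that $A_n\wedge B_n=0$, so $|T^{n+1}-T^n|=A_n+B_n$ in the regular-operator lattice.

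Because $L_1$ is an AL-space, the regular-operator norm coincides with the usual operator norm; this yields the key identity
\[
\|T^{n+1}-T^n\|=\||T^{n+1}-T^n|\|=\|A_n+B_n\|.
\]
Monotonicity of the operator norm on positive operators on $L_1$, applied to $0\le A_n\le A_n+B_n$, then gives $\|A_n\|\le\|A_n+B_n\|=\|T^{n+1}-T^n\|$, and symmetrically $\|B_n\|\le\|T^{n+1}-T^n\|$.

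The implications now become transparent. For $(i)\Rightarrow(ii)$, Theorem~\ref{0-2} applied to hypothesis (i) delivers $\|T^{n+1}-T^n\|\to 0$, hence $\|A_n\|\to 0$, so $\|A_n\|<1$ for all sufficiently large $n$, which is (ii). For $(ii)\Rightarrow(iii)$, the triangle inequality gives $\|T^{m+1}-T^m\|\le\|A_m\|+\|B_m\|$; since $0\le B_m\le T^m$ and $T^m$ is a positive contraction, $\|B_m\|\le 1$, while $\|A_m\|<1$ by (ii), so $\|T^{m+1}-T^m\|<2$, which is (i), and Theorem~\ref{0-2} once more supplies (iii).

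The main conceptual point is the norm identity $\|T^{n+1}-T^n\|=\|A_n+B_n\|$, which rests crucially on the AL-space structure of $L_1$; without it the clean bound $\|A_n\|\le\|T^{n+1}-T^n\|$ would not be available and one would have to work with the cruder $\|A_n\|\le\|T^{n+1}-T^n\|+\|B_n\|$. A fully self-contained proof---one not invoking Theorem~\ref{0-2} and thus providing an independent derivation of Foguel's result---would instead start from the inequality $A_{n+1}\le TA_n$ (a consequence of $TS_n\le S_{n+1}$ via positivity and the semigroup law), deduce monotonic decrease of $\|A_n\|$ and of $\|A_n+B_n\|$, and then extract strict decay from the gap $2-\|T^{m+1}-T^m\|>0$ by comparing $\|A_n f_n\|_1$, $\|S_n f_n\|_1$ and $\|B_n f_n\|_1$ on near-optimal test functions $f_n$. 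This extraction of decay is the principal obstacle in any direct argument and embodies the true content of the zero-two phenomenon.
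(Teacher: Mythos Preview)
Your argument is formally valid, but it is circular in the sense that matters here. The paper quotes Theorem~\ref{B} precisely because it is Zaharopol's \emph{alternative} proof of Theorem~\ref{0-2}: the chain $(\mathrm{i})\Rightarrow(\mathrm{ii})\Rightarrow(\mathrm{iii})$ is meant to \emph{reprove} Foguel's result, and the paper explicitly says that the step $(\mathrm{ii})\Rightarrow(\mathrm{iii})$ is obtained from Theorem~\ref{Z1} (the dominated--contraction lemma). You instead prove $(\mathrm{i})\Rightarrow(\mathrm{ii})$ by passing through $(\mathrm{iii})$ via Theorem~\ref{0-2}, and prove $(\mathrm{ii})\Rightarrow(\mathrm{iii})$ by first deducing $(\mathrm{i})$ and then invoking Theorem~\ref{0-2} again. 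That reduces Theorem~\ref{B} to a tautological repackaging of Theorem~\ref{0-2}, which is exactly what Zaharopol's argument avoids. You recognize this yourself in your final paragraph, but the ``self-contained'' route you sketch there is the actual content of the theorem, not an optional refinement.

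For comparison: the intended proof of $(\mathrm{i})\Rightarrow(\mathrm{ii})$ is direct and does not need Theorem~\ref{0-2}. With your notation, if $\|A_m\|=1$ choose $f_n\ge 0$, $\|f_n\|_1=1$, with $\|A_m f_n\|_1\to 1$; since $\|A_m f_n\|_1+\|S_m f_n\|_1=\|T^{m+1}f_n\|_1\le 1$ one gets $\|S_m f_n\|_1\to 0$, hence $\|T^m f_n\|_1\ge\|T^{m+1}f_n\|_1\to 1$ and $\|B_m f_n\|_1=\|T^m f_n\|_1-\|S_m f_n\|_1\to 1$, so $\|A_m f_n\|_1+\|B_m f_n\|_1\to 2$, contradicting $\|A_m+B_m\|<2$. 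For $(\mathrm{ii})\Rightarrow(\mathrm{iii})$, the paper's indicated route applies Theorem~\ref{Z1} to the pair $S_m\le T^m$ (and to $S_m\le T^{m+1}$) to control $\|T^{mn}-S_m^{\,n}\|$ and then combines this with an averaging estimate of the type \eqref{G-l}; this is the mechanism that the paper generalizes in Theorem~\ref{ZN0} and Theorem~\ref{1-2}. Your lattice set-up with $A_n,B_n$ is exactly the right framework, but the substance of the theorem lies in carrying out $(\mathrm{ii})\Rightarrow(\mathrm{iii})$ \emph{without} appealing to Theorem~\ref{0-2}.
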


To establish the implication (ii)$\Rightarrow$(iii) in \cite{Z} the
following auxiliary fact was established.

\begin{thm}\label{Z1}\cite{Z} Let $T,S:L_1\to
L_1$ be two positive contractions such that $T\leq S$. If
$\|S-T\|<1$ then $\|S^n-T^n\|<1$ for all $n\in\bn$.
\end{thm}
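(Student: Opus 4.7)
The plan is to prove by induction the quantitative estimate
\[
\|S^n - T^n\| \leq 1 - (1-\beta)^n, \qquad \beta := \|S-T\| < 1,
\]
which immediately implies the claim, since $(1-\beta)^n > 0$ for every $n$. The inductive step rests on the operator identity
\[
S^{n+1} - T^{n+1} = S(S^n - T^n) + (S-T)T^n,
\]
in which, thanks to $0 \leq T \leq S$, every factor is positive; hence, applied to any $f \geq 0$, the two summands are non-negative and their $L_1$-norms add.

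Fix $f \geq 0$ with $\|f\|_1 = 1$ and write $a := \|T^n f\|_1 \in [0,1]$, $\gamma := \|S^n - T^n\|$. I would derive two parallel upper bounds on
\[
\|(S^{n+1}-T^{n+1})f\|_1 = \|S(S^n-T^n)f\|_1 + \|(S-T)T^n f\|_1.
\]
The first invokes the contraction property of $S$ together with the definition of $\beta$ to give $\gamma + \beta a$. The second uses the positivity identity $\|(S^n-T^n)f\|_1 = \|S^n f\|_1 - \|T^n f\|_1 \leq 1 - a$, combined with $\|(S-T)T^n f\|_1 \leq \beta a$, to give $1 - (1-\beta)a$. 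Since both bounds hold, so does their minimum; one is affine increasing in $a$, the other affine decreasing, so the supremum of the minimum over $a \in [0,1]$ is attained at the crossing point $a = 1 - \gamma$, yielding
\[
\|S^{n+1} - T^{n+1}\| \leq \beta + (1-\beta)\gamma,
\]
equivalently $1 - \|S^{n+1}-T^{n+1}\| \geq (1-\beta)\bigl(1 - \|S^n-T^n\|\bigr)$. Iterating from the base $n=1$ produces $1 - \|S^n - T^n\| \geq (1-\beta)^n$, completing the induction.

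The main conceptual obstacle is recognizing that neither estimate alone suffices: the first iterates to a quantity drifting above $1$, while the second degenerates to $\leq 1$ precisely when $T^n f$ has vanishing $L_1$-norm. Their deficiencies are complementary in $a$, and it is exactly this complementarity that contracts the gap $1 - \|S^n - T^n\|$ by a geometric factor $(1-\beta)$ at each step.
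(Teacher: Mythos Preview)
Your argument is correct. The paper itself does not prove Theorem~\ref{Z1} directly; it cites Zaharopol and then recovers the statement as the $Z=Id$, $n_0=1$, commutative specialization of Theorem~\ref{ZN0} (via Corollary~\ref{ZN1}). That route proceeds by contradiction: assuming $\|S^{n}-T^{n}\|=1$ for some minimal $n>n_0$, one selects a nearly extremizing sequence $x_k\geq 0$, uses the additivity $\|(S^n-T^n)x\|_1=\|S^nx\|_1-\|T^nx\|_1$ to force $\|S^nx_k\|\to 1$ and $\|T^nx_k\|\to 0$, extracts convergent subsequences, and renormalizes by $T^m z_k/\|T^m z_k\|$ to contradict $\|S^{n_0}-T^{n_0}\|<1$. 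Your proof is instead a direct induction with the quantitative conclusion $\|S^n-T^n\|\leq 1-(1-\beta)^n$; the two complementary bounds whose minimum you take encode the same two ingredients (the inductive hypothesis on $\gamma$, and the norm-additivity identity yielding the $1-a$ bound), but you exploit them constructively rather than through a limiting extremal-sequence argument. Your approach is shorter and delivers an explicit rate; the paper's indirect argument, on the other hand, is designed to accommodate the auxiliary commuting contraction $Z$ and an arbitrary initial exponent $n_0$, neither of which your recursion handles without further ideas, since for $n_0>1$ one no longer controls $\|S-T\|$, and the one-step decomposition $S^{n+1}-T^{n+1}=S(S^n-T^n)+(S-T)T^n$ does not factor through $Z$.
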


In \cite{GMB2015,MTH2010,Mu} we have extended the last result for
several kind of Banach spaces. Therefore, it is natural step is to
find analogous of Theorem \ref{B} in a non-commutative setting. Note
that a strong version of the uniform "zero-two" law was proved in
\cite{MTA2005}, but it was not a desirable result.

The aim of this paper is to prove a non-commutative version of a
generalized uniform "zero-two" law for multi-parametric family of
positive contractions of $L_1$-spaces associated with von Neumann
algebras. As a particular case (when the algebra is commutative), we
recover the results of \cite{Mu,Mu2015}. Moreover, we emphasize that
Theorem \ref{0-2} will be included in the main result as a
particular case.

On the other hand, development of the theory of integration for
measures $\mu$ with the values in Dedekind complete Riesz spaces has
inspired the study of (bo)-complete lattice-normed $L_p$-spaces
(see, for example, \cite{K}). The existence of center-valued traces
on finite von Neumann algebras naturally leads to develop the theory
of integration for this kind of traces. In \cite{GaC}
non-commutative $L_p$-spaces associated with with central-valued
traces have been investigated. Furthermore, in \cite{CZ} more
general $L_p$-spaces associated with Maharam traces have been
studied.

Therefore, another main aim of this paper is to establish the
uniform "zero-two" law for non-commutative $L_1$-spaces associated
with with central-valued traces. It is known \cite{GaC} that
$L_p$-spaces associated with central-valued traces are
Banach-Kantorovich spaces. The theory of Banach-Kantorovich spaces
is now sufficiently well-developed (for instance, see \cite{K}). One
of the important approach to study  Banach-Kantorovich spaces is
provided by the theory of continuous and measurable Banach bundles
\cite{G0,G1}. In this approach, the representation of a
Banach-Kantorovich lattice as a space of measurable sections of a
measurable Banach bundle makes it possible to obtain the needed
properties of the lattice by means of the corresponding stalkwise
verification of the properties. As an application of this approach,
in \cite{GaC} noncommutative $L_p(M,\Phi)$-spaces associated with
center-valued traces are represented as bundle of noncommutative
$L_p$-spaces associated with numerical traces. For other
applications of the mentioned method, we refer the reader to
\cite{AAK,CGa},\cite{GM}-\cite{GMB2015}.

In the second part of this paper, we are going to prove a
vector-valued analogous of the main result for positive contractions
of non-commutative $L_1$-spaces associated with with central-valued
trace. To establish the result, we mainly employ the last mentioned
approach for the existence of vector-valued lifting, which allowed
us to prove the required result. If the algebra is commutative, then
the obtained result extends the main result of \cite{GMB2015}.

Let us outline of the organization of the present paper. In section
2, we collect some necessary well-know facts about non-commutative
$L_1$-spaces. In section 3, we prove an auxiliary result (a
non-commutative analogous of Theorem \ref{Z1}) about dominant
operators. Section 4 is devoted to the proof of a generalized
uniform "zero-two" law for multi-parametric family of positive
contractions of the non-commutative $L_1$-spaces. In section 5, we
recall necessary definitions about $L_1(M,\Phi)$ -- the
non-commutative $L_1$-spaces associated with center valued traces.
Section 6 contains an auxiliary result about the existence of the
non-commutative vector-valued lifting. Finally, in section 7, by
means of the result of section 6, we first prove that every positive
contraction of $L_1(M,\Phi)$ can be represented as a measurable
bundle of positive contractions of non-commutative $L_1$-spaces, and
this allowed us to establish a vector-valued analogous of the
uniform "zero-two" law for positive contractions of $L_1(M,\Phi)$.

\section{Preliminaries}

Throughout the paper  $M$ would be a von Neumann algebra with the
unit $\id$ and let $\t$ be a faithful  normal semi finite trace on
$M$. Therefore we omit this condition from the formulation of
theorems. Recall that an element $x\in M$ is called {\it
self-adjoint} if $x=x^*$. The set of all self-adjoint elements is
denoted by $M_{sa}$. By $M_*$ we denote a pre-dual space to $M$
(see for definitions \cite{BR},\cite{T}).

Let $\frak{N}=\{x\in M: \ \t(|x|)<\infty\}$, here $|x|$ denotes
the modules of an element $x$, i.e. $|x|=\sqrt{x^*x}$.  Define the
map $\|\cdot\|_{1}:\frak{N}\rightarrow [0, \ \infty)$ defined by
the formula $\|x\|_{1}=\tau(|x|)$ is a norm (see \cite{N}). The
completion of $\frak{N}$ with respect to the norm $\|\cdot\|_{1}$
is denoted by $L_{1}(M,\t)$. It is known \cite{N} that the spaces
$L_{1}(M,\tau)$  and $M_*$ are isometrically isomorphic, therefore
they can be identified. Further we will use this fact without
noting.

\begin{thm}\label{2.1}\cite{N} The space $L_{1}(M,\tau)$
coincides with the set
$$
L_{1}=\{ x=\int^{\infty}_{-\infty}\lambda de_{\lambda}\
:\int^{\infty}_{-\infty}|\lambda| d \tau (e_{\lambda})< \infty \}.
$$
Moreover,
$$
\|x\|_{1}=\int^{\infty}_{-\infty}|\lambda |d \tau (e_{\lambda}).
$$
\end{thm}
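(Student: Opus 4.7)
The plan is to verify the claimed characterization by showing the two spaces share a dense subspace on which the norms coincide, and then establishing completeness of the spectrally defined side. First, for a self-adjoint $x\in\frak{N}$ with spectral resolution $x=\int_{-\infty}^{\infty}\lambda\,de_\lambda$ I would apply the Borel functional calculus to obtain $|x|=\int_{-\infty}^{\infty}|\lambda|\,de_\lambda$, and then use normality of $\tau$ to deduce
\[
\|x\|_1=\tau(|x|)=\int_{-\infty}^{\infty}|\lambda|\,d\tau(e_\lambda),
\]
the right-hand side being a Stieltjes integral against the scalar spectral measure $\lambda\mapsto\tau(e_\lambda)$. For a general $x\in\frak{N}$ the formula follows from the polar decomposition $x=u|x|$, reducing to the self-adjoint case applied to $|x|$.

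Next, I would view $L_1$ as the set of closed self-adjoint operators affiliated with $M$ whose spectral resolution satisfies $\int|\lambda|\,d\tau(e_\lambda)<\infty$, equipped with the prescribed norm. For density of $\frak{N}$ inside $L_1$, I would truncate a given $x\in L_1$ by $x_n=\int_{-n}^{n}\lambda\,de_\lambda$; each $x_n$ is bounded, lies in $M$, and has $\tau(|x_n|)=\int_{-n}^{n}|\lambda|\,d\tau(e_\lambda)<\infty$, so $x_n\in\frak{N}$. Monotone convergence applied to the tails then gives $\|x-x_n\|_1=\int_{|\lambda|>n}|\lambda|\,d\tau(e_\lambda)\to 0$, which also extends the norm formula from $\frak{N}$ to all of $L_1$.

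The main obstacle is completeness of $L_1$, since Cauchy sequences there consist of possibly unbounded affiliated operators that must be summed carefully. To handle this I would embed $L_1$ into the $\ast$-algebra $L_0(M,\tau)$ of $\tau$-measurable operators, which is complete under convergence in measure. A Chebyshev-type estimate, bounding the trace of the spectral projection of $|x|$ onto $(\lambda,\infty)$ by $\|x\|_1/\lambda$, shows that a $\|\cdot\|_1$-Cauchy sequence is Cauchy in measure, hence converges in $L_0(M,\tau)$ to some $\tau$-measurable operator $x$. Uniform integrability of the spectral tails, inherited from the Cauchy property, then forces $x$ to lie in $L_1$ and the convergence to hold in $\|\cdot\|_1$-norm.

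Putting the pieces together, the norm identity on $\frak{N}$, the density of $\frak{N}$ inside $L_1$, and the completeness of $L_1$ show that $L_1$ is the completion of $(\frak{N},\|\cdot\|_1)$, which is $L_1(M,\tau)$ by definition; this simultaneously yields the set-theoretic identification and the norm formula claimed in the theorem.
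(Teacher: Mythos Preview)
The paper does not actually prove this theorem: it is stated as a preliminary fact with the citation \cite{N} (Nelson, \emph{Note on non-commutative integration}), and no argument is supplied in the present work. There is therefore no in-paper proof to compare against.

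Your sketch follows the standard Nelson--Yeadon route and is essentially correct: the functional-calculus identity $\tau(|x|)=\int|\lambda|\,d\tau(e_\lambda)$ on $\frak{N}_{sa}$, spectral truncation for density, and completeness via the measure topology on $L_0(M,\tau)$ together with the Chebyshev estimate $\tau(e_{(\lambda,\infty)}(|x|))\leq \lambda^{-1}\|x\|_1$ are exactly the ingredients in \cite{N} and \cite{Y}. One small point worth making explicit: as written, the set $L_1$ in the statement consists of operators admitting a real spectral resolution, hence self-adjoint $\tau$-measurable operators; the identification with all of $L_{1}(M,\tau)$ then goes through \eqref{L11} and polar decomposition, which you allude to but might state more carefully. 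Also, the passage from ``Cauchy in $\|\cdot\|_1$ implies the measure-limit lies in $L_1$ with norm convergence'' deserves one more line: Fatou-type lower semicontinuity of $x\mapsto\tau(|x|)$ under measure convergence (or equivalently the lower semicontinuity of the $L_1$-norm) is what closes that step, not uniform integrability alone.
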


It is known \cite{N} that the equality
\begin{equation}\label{L11}
L_1(M,\t)=L_1(M_{sa},\t)+iL_1(M_{sa},\t) \end{equation} is valid.
Note that $L_1(M_{sa},\t)$ is a pre-dual to $M_{sa}$.

Let $T:L_1(M,\t)\to L_1(M,\t)$ be any bounded linear operator, by
$\tilde T$ we denote its restriction to $L_1(M_{sa},\t)$. Then due
to \eqref{L11} we have $T(x+iy)= \tilde T(x)+i\tilde T(y)$, where
$x,y\in L_1(M_{sa},\t)$. This means that any linear bounded
operator is uniquely defined by its restriction to
$L_1(M_{sa},\t)$. Therefore, in what follows, we only consider
linear operators on $L_1(M_{sa},\t)$ over real numbers.

Recall that a linear operator $T$ is called {\it positive} if
$Tx\geq 0$ whenever $x\geq 0$. A linear operator $T$ is said to be
a {\it contraction} if $\|T(x)\|_1\leq \|x\|_1$ for all $x\in
L_1(M_{sa},\t)$. Denote
$$\|T\|=\sup\{\|Tx\|_1: \ \|x\|_1=1, x\in
L_1(M_{sa},\t)\}.$$

Let $T,S:L_1\to L_1$ be two positive contractions. In what
follows, we write $T\leq S$ if $S-T$ is a positive operator.

The following auxiliary facts are well known (see for example
\cite{MTH2010}).

\begin{lem}\label{3.1}  Let $T: L_{1}(M_{sa},\tau)\rightarrow
L_{1}(M_{sa},\tau)$ be a positive operator. Then
$$\|T\|=\sup_{\|x\|=1}\|Tx\|=\sup_{\|x\|=1,x\geq
0}\|Tx\|.$$ \end{lem}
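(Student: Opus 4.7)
The first equality is simply the definition of the operator norm on $L_1(M_{sa},\tau)$, so the content lies in the second equality. One direction, namely $\sup_{\|x\|=1,x\geq 0}\|Tx\|_1\leq \sup_{\|x\|=1}\|Tx\|_1$, is trivial since the right-hand supremum is taken over a larger set. My plan is therefore to prove the reverse inequality
$$\sup_{\|x\|_1=1}\|Tx\|_1 \leq \sup_{\|y\|_1=1,\, y\geq 0}\|Ty\|_1 =: M$$
by reducing an arbitrary self-adjoint element of the unit sphere to its positive and negative parts.

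The key tool is the Jordan decomposition in $L_1(M_{sa},\tau)$, which follows from the spectral theorem (Theorem \ref{2.1}). Given $x\in L_1(M_{sa},\tau)$ with spectral resolution $x=\int\lambda\, de_\lambda$, set
$$x^+=\int_{[0,\infty)}\lambda\, de_\lambda,\qquad x^-=-\int_{(-\infty,0)}\lambda\, de_\lambda.$$
Then $x^\pm\geq 0$ have mutually orthogonal support projections, $x=x^+-x^-$, and $|x|=x^++x^-$, so
$$\|x\|_1=\tau(|x|)=\tau(x^+)+\tau(x^-)=\|x^+\|_1+\|x^-\|_1.$$
Since $T$ is positive, $Tx^+\geq 0$ and $Tx^-\geq 0$, and $Tx=Tx^+-Tx^-$ is a difference of two positive elements (not necessarily the Jordan decomposition of $Tx$, as supports need not be orthogonal after applying $T$).

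From here the estimate is immediate. By the triangle inequality for $\|\cdot\|_1$ and the definition of $M$,
$$\|Tx\|_1 \leq \|Tx^+\|_1+\|Tx^-\|_1 \leq M\,\|x^+\|_1+M\,\|x^-\|_1 = M\,\|x\|_1.$$
Taking the supremum over $x$ in the unit sphere of $L_1(M_{sa},\tau)$ yields $\|T\|\leq M$, which combined with the trivial inequality gives the desired equality.

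The only mildly delicate point is justifying the additivity $\|x\|_1=\|x^+\|_1+\|x^-\|_1$ in the noncommutative setting; this rests on the orthogonality of the support projections of $x^+$ and $x^-$ produced by the spectral decomposition, together with the formula $\|y\|_1=\tau(|y|)$ from Theorem \ref{2.1}. Once this is in hand, the proof reduces to a routine triangle inequality, and no further nontrivial obstacle arises.
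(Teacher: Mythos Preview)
Your proof is correct and follows the standard argument via Jordan decomposition. Note that the paper does not actually supply a proof of this lemma: it is stated as a well-known fact with a reference to \cite{MTH2010}, so there is no ``paper's own proof'' to compare against beyond observing that your argument is precisely the expected one.
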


\begin{lem}\label{3.2} Let $T, S: L_{1}(M_{sa},\tau)\rightarrow
L_{1}(M_{sa},\tau)$ be two positive contraction such that $T\leq
S$. Then for every $x\in L_1(M_{sa},\t)$, $x\geq 0$ the equality
holds
$$\|Sx-Tx\|=\|Sx\|-\|Tx\|.$$ \end{lem}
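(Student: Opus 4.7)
The plan is to reduce everything to the identification $\|y\|_1=\tau(y)$ on the positive cone of $L_1(M_{sa},\tau)$. Concretely, I would start by collecting the three positive elements that appear in the statement: since $x\geq 0$ and $T,S$ are positive operators, both $Tx$ and $Sx$ lie in the positive cone, and since $T\leq S$ by assumption, the difference $Sx-Tx$ is also positive. This is the only structural fact one really needs.

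Next, I would invoke the description of the $L_1$-norm via Theorem \ref{2.1}: for any $y\in L_1(M_{sa},\tau)$ one has $\|y\|_1=\tau(|y|)$, and on positive elements $|y|=y$, so the norm coincides with the (continuously extended) trace. Applied to $y=Sx-Tx$, this gives
\begin{equation*}
\|Sx-Tx\|_1=\tau(Sx-Tx).
\end{equation*}
Then linearity of $\tau$ on $L_1(M_{sa},\tau)$ yields $\tau(Sx-Tx)=\tau(Sx)-\tau(Tx)=\|Sx\|_1-\|Tx\|_1$, which is exactly the claimed equality.

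There is essentially no obstacle here: the argument is a one-line computation once positivity of $Sx-Tx$ is noted. The only point that deserves a brief justification is that the trace $\tau$, originally defined on $\frak{N}$, extends by continuity to a bounded positive linear functional on $L_1(M_{sa},\tau)$ that agrees with $\|\cdot\|_1$ on the positive cone; this is a standard consequence of the construction of $L_1(M,\tau)$ recalled before Theorem \ref{2.1}, and was implicitly used already in Lemma \ref{3.1}. Contractivity of $T$ and $S$ is not even needed for the identity itself (only positivity and the domination $T\leq S$).
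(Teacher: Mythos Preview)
Your proof is correct and is the standard argument: positivity of $Sx-Tx$ reduces the norm to the (extended) trace, and linearity of $\tau$ does the rest. The paper does not give its own proof of this lemma; it is stated as a well-known auxiliary fact with a reference to \cite{MTH2010}, so there is nothing further to compare.
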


\section{Dominant operators}

In this section we are going to prove an auxiliary result related
to dominant operators.

\begin{thm}\label{ZN0} Let
$Z,T,S:L_{1}(M_{sa},\t)\to L_{1}(M_{sa},\t)$ be positive
contractions such that $T\leq S$ and $ZS=SZ$. If there is an
$n_{0}\in\mathbb{N}$ such that $\|Z(S^{n_{0}}-T^{n_{0}})\|<1$.
Then $\|Z(S^{n}-T^{n})\|<1$ for every $n\geq n_{0}.$
\end{thm}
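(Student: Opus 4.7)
The plan is induction on $n\ge n_0$; the base case is the hypothesis. For the inductive step I use the operator identity
\[
Z(S^{n+1}-T^{n+1})\;=\;S\cdot Z(S^n-T^n)\;+\;Z(S-T)T^n,
\]
obtained by expanding $S^{n+1}-T^{n+1}=S(S^n-T^n)+(S-T)T^n$ and applying the commutation $ZS=SZ$ to the first summand. Both summands are positive operators: $T\le S$ gives $T^n\le S^n$ (by a separate induction using $S^n-T^n=S(S^{n-1}-T^{n-1})+(S-T)T^{n-1}$), so $Z(S^n-T^n)\ge 0$, and then positivity of $S$, $Z$ and $S-T$ gives positivity of both compositions.

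By Lemma \ref{3.1} it suffices to control the norm on positive $x$ with $\|x\|_{1}=1$. For such $x$, since $\|y\|_{1}=\tau(y)$ for $y\ge 0$, the $L_{1}$-norm is additive on sums of positive elements, so
\[
\|Z(S^{n+1}-T^{n+1})x\|_{1}\;=\;\|SZ(S^n-T^n)x\|_{1}\;+\;\|Z(S-T)T^n x\|_{1}.
\]
The first summand is $\le\|Z(S^n-T^n)x\|_{1}$ by contractivity of $S$; for the second, Lemma \ref{3.2} applied to $T^n x\ge 0$ gives $\|(S-T)T^n x\|_{1}=\|ST^n x\|_{1}-\|T^{n+1}x\|_{1}$, after which contractivity of $Z$ yields $\|Z(S-T)T^n x\|_{1}\le\|T^n x\|_{1}-\|T^{n+1}x\|_{1}$.

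The hard part, and the main obstacle, is upgrading these to a strict inequality $<1$ at step $n+1$: added naively they give only
\[
\|Z(S^{n+1}-T^{n+1})x\|_{1}\;\le\;\|Z(S^n-T^n)x\|_{1}+\bigl(\|T^n x\|_{1}-\|T^{n+1}x\|_{1}\bigr),
\]
and the parenthesized term can independently approach $1$, so triangle-inequality bookkeeping alone will not close the induction. My preferred way to deal with this is to pass to the dual picture: $\|Z(S^m-T^m)\|$ equals $\|h_m\|_M$, where $h_m:=S^{*m}b-T^{*m}b$ and $b:=Z^{*}\id\in[0,\id]\subset M$; the commutation rewrites as $S^{*}Z^{*}=Z^{*}S^{*}$, so $S^{*m}b=Z^{*}(S^{*m}\id)$ and the sequence $c_m:=S^{*m}b$ is decreasing in $M$. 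Then I would argue by contradiction: if $\|h_{n+1}\|_M=1$, weak-$\ast$ compactness of the state space of $M$ produces a state $\phi$ with $\phi(c_{n+1})=1$ and $\phi(d_{n+1})=0$ (where $d_m:=T^{*m}b$). From $\phi(c_{n+1})=1$ and $c_{n+1}\le S^{*}\id\le\id$ one first deduces that $\phi\circ S^{*}$ is itself a state and $(\phi\circ S^{*})(c_n)=1$; combining this with $\phi(T^{*}d_n)=0$ and iterating the pull-back $n-n_0$ times should produce a state witnessing $\|h_{n_0}\|_M=1$, contradicting the hypothesis.
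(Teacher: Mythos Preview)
Your dual reformulation is sound: for a positive operator $P$ on $L_{1}$ one has $\|P\|=\|P^{*}\id\|_{M}$, so $\|Z(S^{m}-T^{m})\|=\|h_{m}\|_{M}$ with $h_{m}=c_{m}-d_{m}$, $c_{m}:=(S^{*})^{m}b$, $d_{m}:=(T^{*})^{m}b$, $b:=Z^{*}\id$; the commutation $ZS=SZ$ gives $c_{m}=Z^{*}\bigl((S^{*})^{m}\id\bigr)$, hence the $c_{m}$ decrease. The gap is in the contradiction step. Pulling back by $S^{*}$ yields a state $\psi=\phi\circ S^{*}$ with $\psi(c_{n})=1$, but it gives no control on $\psi(d_{n})$: the fact $\phi(d_{n+1})=\phi(T^{*}d_{n})=0$ concerns the \emph{different} functional $\phi\circ T^{*}$, and iterating $S^{*}$-pullbacks only widens the mismatch. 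After $m:=n+1-n_{0}$ such steps you would need $\phi\bigl((S^{*})^{m}d_{n_{0}}\bigr)=0$, whereas you only know $\phi\bigl((T^{*})^{m}d_{n_{0}}\bigr)=0$, and the former dominates the latter. The phrase ``combining \ldots\ and iterating'' hides exactly the point that has to be proved.

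What the paper does---translated into your dual language---is to pull back by $T^{*}$ rather than $S^{*}$. Take $n$ minimal with $\|h_{n}\|_{M}=1$, so $\|h_{n-1}\|_{M}<1$, and set $m=n-n_{0}$. From $c_{n}\le c_{n-1}\le\id$ and $\phi(c_{n})=1$ one gets $\phi(c_{n-1})=1$, whence $\phi(d_{n-1})\ge 1-\|h_{n-1}\|_{M}>0$; since $d_{n-1}\le (T^{*})^{m}\id$ this forces $\gamma:=\phi\bigl((T^{*})^{m}\id\bigr)>0$, so $\psi:=\gamma^{-1}\,\phi\circ(T^{*})^{m}$ is a state. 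Then $\psi(d_{n_{0}})=\gamma^{-1}\phi(d_{n})=0$ is automatic. The substantial point---and the heart of the paper's argument---is that $\psi(c_{n_{0}})=1$: using positivity of $(S^{*})^{m}-(T^{*})^{m}$ and $c_{n_{0}}\le\id$,
\[
(T^{*})^{m}c_{n_{0}}\;\ge\;(S^{*})^{m}c_{n_{0}}-\bigl((S^{*})^{m}\id-(T^{*})^{m}\id\bigr)\;=\;c_{n}-(S^{*})^{m}\id+(T^{*})^{m}\id,
\]
and since $c_{n}\le(S^{*})^{m}\id\le\id$ forces $\phi\bigl((S^{*})^{m}\id\bigr)=1$, applying $\phi$ gives $\phi\bigl((T^{*})^{m}c_{n_{0}}\bigr)\ge\gamma$; the reverse inequality follows from $c_{n_{0}}\le\id$. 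Hence $\psi(h_{n_{0}})=1$, contradicting $\|h_{n_{0}}\|_{M}<1$. Your sketch contains neither the correct direction of pull-back nor this estimate, and without them the induction does not close.
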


\begin{proof} Assume the contrary, i.e.  $\|Z(S^{n}-T^{n})\|=1$ for some $n>n_{0}.$
Denote
\begin{equation*}
m=\min\{n\in\mathbb{N}:\|Z(S^{n_0+n}-T^{n_0+n})\|=1\}.
\end{equation*}
It is clear that $m\geq 1$. The positivity of $Z$ with $T\leq S$
implies that $Z(S^{n_{0}+n}-T^{n_{0}+n})$ is a positive operator.
Then according to Lemma \ref{3.1} there exists a sequence
$\{x_{n}\}\in L_{1}(M_{sa},\t)$ such that $x_{n}\geq 0$,
$\|x_{n}\|=1, \forall n\in\mathbb{N}$ and
\begin{eqnarray}\label{eq1}
\lim\limits_{n\to\infty}\|Z(S^{n_{0}+m}-T^{n_{0}+m})x_{n}\|=1.
\end{eqnarray}

The positivity of $Z(S^{n_{0}+m}-T^{n_{0}+m})$ and $x_{n}\geq 0$
together with  Lemma \ref{3.2} yield that
\begin{eqnarray}\label{eq2}
\|Z(S^{n_{0}+m}-T^{n_{0}+m})x_{n}\|=\|ZS^{n_{0}+m}x_{n}\|-\|ZT^{n_{0}+m}x_{n}\|
\end{eqnarray}
for every $n\in\bn$. It then follows from \eqref{eq1},\eqref{eq2}
that

\begin{eqnarray}\label{eq3}
&&\lim\limits_{n\to\infty}\|ZS^{n_{0}+m}x_{n}\|=1,\\[2mm]
\label{eq4} &&\lim\limits_{n\to\infty}\|ZT^{n_{0}+m}x_{n}\|=0.
\end{eqnarray}

Thanks to the contractivity of $S$, $Z$ together with $ZS=SZ$ we
obtain
\begin{equation*}
\|ZS^{n_0+m}x_n\|=\|S(ZS^{n_0+m-1}x_n)\|\leq\|ZS^{n_0+m-1}x_n\|\leq
\|S^{m}x_n\|.
\end{equation*}
Hence, the last ones with \eqref{eq3} imply
\begin{equation}\label{ZS1}
\lim\limits_{n\to\infty}\|ZS^{n_0+m-1}x_n\|=1, \ \ \
\lim\limits_{n\to\infty}\|S^{m}x_n\|=1.
\end{equation}

Moreover, the contractivity of $Z,S$ and $T$ ($i=1,2$) implies
that $\|ZT^{n_{0}+m-1}x_{n}\|\leq 1$, $\|T^{m}x_{n}\|\leq 1$ and
$\|ZS^{n_{0}}T^{m}x_{n}\|\leq 1$ for every $n\in\mathbb{N}$.
Therefore, we may choose a subsequence $\{y_{k}\}$ of $\{x_{n}\}$
such that the sequences $\{\|ZT^{n_{0}+m-1}y_{k}\|\}$,
$\{\|T^{m}y_k\|\}$, $\{\|ZS^{n_{0}}T^{m}y_k\|\}$ converge. Hence,
denote their limits as follows
\begin{eqnarray}\label{eq5}
&&\alpha=\lim\limits_{k\to\infty}\|ZT^{n_{0}+m-1}y_{k}\|,\\
\label{eq6}
&&\beta=\lim\limits_{k\to\infty}\|ZS^{n_{0}}T^{m}y_{k}\|,\\
\label{eq7} &&\gamma=\lim\limits_{k\to\infty}\|T^{m}y_{k}\|.
\end{eqnarray}

The inequality $\|Z(S^{n_{0}+m-1}-T^{n_{0}+m-1})\|<1$ with
\eqref{ZS1} implies that $\alpha>0$. Hence  we may choose a
subsequence $\{z_{k}\}$ of $\{y_{k}\}$ such that
$\|ZT^{n_{0}+m-1}z_{k}\|\neq 0$ for all $k\in\mathbb{N}$.

From $\|ZT^{n_{0}+m-1}z_{k}\|\leq \|T^{m}z_{k}\|$ together with
\eqref{eq5}, \eqref{eq7} we find $\alpha\leq\gamma$, and hence
$\gamma>0.$

Now using Lemma \ref{3.2} one gets
\begin{eqnarray}\label{eq10}
\|ZS^{n_{0}}T^{m}z_{k}\|&=&\|ZS^{n_{0}+m}z_k-Z(S^{n_{0}+m}-S^{n_{0}}T^{m})z_{k})\|\nonumber\\
&=&\|ZS^{n_{0}+m}z_{k}\|-\|ZS^{n_0}(S^{m}-T^{m})z_{k}\|\nonumber\\
&\geq&
\|ZS^{n_{0}+m}z_{k}\|-\|S^{m}z_{k}-T^{m}z_{k}\|\nonumber\\
&=&\|ZS^{n_{0}+m}z_{k}\|-\|S^{m}z_{k}\|+\|T^{m}z_{k}\|
\end{eqnarray}
Due to \eqref{eq3} and \eqref{ZS1} we have
$$\lim\limits_{k\to\infty}\big(\|ZS^{n_{0}+m}z_{k}\|-\|S^{m}z_{k}\|\big)=0;$$
which with \eqref{eq10} implies that
$$\lim\limits_{k\to\infty}\|ZS^{n_{0}}T^{m}z_{k}\|\geq\lim\limits_{k\to\infty}\|T^{m}z_{k}\|,$$
this means $\beta\geq\gamma.$

On the other hand, from
$\|ZS^{n_{0}}T^{m}z_{k}\|\leq\|T_2^{m}z_{k}\|$ it follows that
$\gamma\geq\beta$, so $\gamma=\beta$.

Let us denote
$$u_{k}=\frac{T^{m}z_{k}}{\|T^{m}z_{k}\|}, \ \ k\in\bn.$$
Then from $\gamma=\beta$ together with \eqref{eq4}  we obtain
\begin{eqnarray*}
&&\lim\limits_{k\to\infty}\|ZS^{n_{0}}u_{k}\|=
\lim\limits_{k\to\infty}\frac{\|ZS^{n_{0}}T^{m}z_{k}\|}{\|T^{m}z_{k}\|}=1,\\[2mm]
&&\lim\limits_{k\to\infty}\|ZT^{n_{0}}u_{k}\|=
\lim\limits_{k\to\infty}\frac{\|ZT^{n_{0}+m}z_{k}\|}{\|T^{m}z_{k}\|}=0.
\end{eqnarray*}

So, keeping in mind Lemma \eqref{3.2} and the positivity of
$Z(S^{n_{0}}-T^{n_{0}})$, one finds that
$$\lim\limits_{k\to\infty}\|Z(S^{n_{0}}-T^{n_{0}})u_{k}\|=1.$$
Due to  $\|u_{k}\|=1$, $u_{k}\geq 0$ ( for all $k\in\mathbb{N}$)
from Lemma \eqref{3.1} we infer that
$\|Z(S^{n_{0}}-T^{n_{0}})\|=1,$ which is a contradiction. This
completes the proof.
\end{proof}

 We note that the proved theorem extends a main result of the
paper \cite{MTH2010}, which can be seen in the following
corollary.

\begin{cor}\label{ZN1} Let
$T,S:L_{1}(M_{sa},\t)\to L_{1}(M_{sa},\t)$ be positive
contractions such that $T\leq S$. If there is an
$n_{0}\in\mathbb{N}$ such that $\|S^{n_{0}}-T^{n_{0}}\|<1$. Then
$\|S^{n}-T^{n}\|<1$ for every $n\geq n_{0}.$
\end{cor}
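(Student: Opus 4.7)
The plan is to derive Corollary \ref{ZN1} as a direct specialization of Theorem \ref{ZN0} by choosing $Z$ to be the identity operator $\id$ on $L_1(M_{sa},\t)$. This is the most economical route since every hypothesis on $Z$ in Theorem \ref{ZN0} is trivially fulfilled by $\id$.

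Concretely, I would first verify the three conditions required of $Z$ in Theorem \ref{ZN0}: (a) the identity operator is positive (it sends positive elements to themselves), (b) it is a contraction (indeed an isometry), and (c) it commutes with every bounded linear operator, in particular with $S$, so that $\id\, S = S\,\id$. Next, I would translate the hypothesis of Corollary \ref{ZN1}: the assumption $\|S^{n_0} - T^{n_0}\| < 1$ is literally $\|\id\,(S^{n_0} - T^{n_0})\| < 1$. Applying Theorem \ref{ZN0} with $Z = \id$ then yields $\|\id\,(S^n - T^n)\| < 1$, i.e.\ $\|S^n - T^n\| < 1$, for every $n \geq n_0$.

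Since the corollary is an immediate instantiation, there is no genuine obstacle; the only thing to be careful about is that $T \le S$ is preserved and used exactly as in the hypothesis of Theorem \ref{ZN0}. The proof is therefore a one-line deduction, and I would present it as such, noting explicitly that taking $Z=\id$ recovers the main result of \cite{MTH2010} as a particular case of the commuting-dominant version established in Theorem \ref{ZN0}.
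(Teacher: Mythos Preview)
Your proposal is correct and matches the paper's own proof exactly: the paper simply notes that the corollary follows immediately from Theorem \ref{ZN0} by taking $Z=\mathrm{Id}$. Your verification that the identity satisfies the positivity, contractivity, and commutation hypotheses is appropriate and is all that is needed.
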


The proof immediately follows if one takes $Z=Id$. Note that if
$n_0=1$ and $M$ is a commutative von Neumann algebra, then from
Corollary \ref{ZN1} we immediately get the Zaharopol's result (see
Theorem \ref{Z1}).

\section{A multi-parametric generalization of the zero-two law}

In this section we are going to prove a  multi-parametric
generalization of the zero-two law for positive contractions of
non-commutative $L_1$-space.

Let $T:L_{1}(M_{sa},\t)\to L_{1}(M_{sa},\t)$ be a positive
contraction. Then its conjugate $T^*$ acts on $M_{sa}$, and it is
also positive and enjoys $T^*\id\leq \id$. If one has $T^*\id=\id$,
then $T$ is called \textit{unital positive contraction}.

Let us first introduce some notations. Denote
$\bn_0=\bn\cup\{0\}$. For any
$\mb=(m_1,\dots,m_d),\nb=(n_1,\dots,n_d)\in\bn^d_0$ ($d\geq 1)$ by
the usual way, we define $\mb+\nb=(m_1+n_1,\dots,m_d+n_d)$,
$\ell\nb=(\ell n_1,\dots,\ell n_d)$, where $\ell\in\bn_0$. We
write ${\mathbf{n}}\leq{\mathbf{k}}$ if and only if $n_i\leq k_i$
(i=1,2,\dots,d). We denote $|\nb|:=n_1+\cdots+n_d$.

Let us formulate our main result.

\begin{thm}\label{1-2} Let $Z:L_1(M_{sa},\t)\to L_1(M_{sa},\t)$ be a unital positive contraction. Assume that
$T_k:L_1(M_{sa},\t)\to L_1(M_{sa},\t)$, ($k=1,\dots,d$) be unital
positive contractions such that $ZT_i=T_iZ$, $T_iT_j=T_jT_i$, for
every $i,j\in\{1,\dots,d\}$. If there are $\mb\in\bn_0^d$,
$\kb\in\bn^d_0$ and a positive contraction $S:L_1(M_{sa},\t)\to
L_1(M_{sa},\t)$ such that $SZ=ZS$ with
\begin{eqnarray}\label{T1} &&Z\Tb^{\mb+\kb}\geq
ZS, \ \
Z\Tb^{\mb}\geq ZS \ \ \textrm{with}\\[2mm]\label{T2}
&& \|Z(\Tb^{\mb+\kb}-S)\|<1, \ \ \|Z(\Tb^{\mb}- S)\|<1.
\end{eqnarray}
then for any $\i>0$ there are $M\in\bn$ and $\nb_0\in\bn^d_0$ such
that
$$
\|Z^M(\Tb^{\nb+\kb}-\Tb^{\nb})\|<\i \ \ \ \textrm{for all} \ \
\nb\geq\nb_0.
$$
Here $\Tb^{\nb}:=T_1^{n_1}\cdots T_d^{n_d}$,
$\nb=(n_1,\dots,n_d)\in\bn^d_0$.
\end{thm}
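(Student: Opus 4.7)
My plan combines Theorem~\ref{ZN0}, the commutativity hypotheses $ZT_i = T_iZ$ and $T_iT_j = T_jT_i$, and a contradiction argument modelled on the proof of Theorem~\ref{ZN0} itself. The setup is to introduce the positive contractions $U := Z\Tb^{\mb}$, $V := Z\Tb^{\mb+\kb}$, and $P := ZS$. Since each $T_i$ and $Z$ are unital, so are $U$ and $V$; moreover $U$ and $V$ commute (by the commutativity of $Z$ with the $T_i$'s, and of the $T_i$'s among themselves). The hypotheses read $U \ge P$, $V \ge P$, $\|U-P\| < 1$, $\|V-P\| < 1$. Applying Theorem~\ref{ZN0} with $Z'=I$ to the pairs $(T',S')=(P,U)$ and $(P,V)$ yields
$$\|U^N-P^N\|<1,\qquad \|V^N-P^N\|<1\qquad \text{for all } N\ge 1.$$

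Next I would reduce the many-index conclusion to a single estimate. For any $M \in \bn$ and $\nb \ge M\mb$, commutativity gives $Z^M(\Tb^{\nb+\kb} - \Tb^\nb) = \Tb^{\nb-M\mb}\cdot U^M(\Tb^\kb - I)$, hence $\|Z^M(\Tb^{\nb+\kb} - \Tb^\nb)\| \le \|U^M(\Tb^\kb - I)\|$. Setting $\nb_0 := M\mb$, the theorem follows once one shows that for every $\varepsilon > 0$ there exists $M$ with $\|U^M(\Tb^\kb - I)\| < \varepsilon$.

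The heart of the argument is establishing this decay. The sequence $a_N := \|U^N(\Tb^\kb - I)\|$ is non-increasing in $N$ (by contractivity of $U$), and a standard Jordan-decomposition argument restricts the supremum to positive test vectors: $a_N = \sup_{y\ge 0,\,\|y\|=1}\|U^N(\Tb^\kb - I)y\|$. The key decomposition
$$U^N(\Tb^\kb - I) = (V-U)U^{N-1} = (V-P)U^{N-1} - (U-P)U^{N-1}$$
exhibits $U^N(\Tb^\kb-I)$ as a difference of two positive operators, each of operator-norm strictly less than $1$ (by Step 1 and contractivity of $U$). Assuming $\ell := \lim_N a_N > 0$, choose positive norm-one $y_N$ with $\|U^N(\Tb^\kb - I)y_N\| \ge \ell - 1/N$, and set $z_N := U^{N-1}y_N$, positive of norm~$1$ by unitality of $U$. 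Applying Lemma~\ref{3.2} to $V \ge P$ and to $U \ge P$ at $z_N$, using unitality of $U$ and $V$ (so $\|Uz_N\| = \|Vz_N\| = 1$), one obtains $\|(V-P)z_N\| = 1 - \|Pz_N\| = \|(U-P)z_N\|$. Passing to a subsequence along which $\|Pz_N\|$, $\|Vz_N\|$, and related quantities converge, and then carrying these limits through compositions with $Z$, $\Tb^\kb$, and $S$ (in the manner of the contradiction argument of Theorem~\ref{ZN0}), one constructs a positive unit vector $x^*$ violating one of the strict bounds $\|Z(\Tb^\mb - S)\| < 1$ or $\|Z(\Tb^{\mb+\kb} - S)\| < 1$, forcing $\ell = 0$.

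The principal obstacle is this last step. Theorem~\ref{ZN0} supplies only constant-in-$N$ bounds $<1$ on the positive auxiliary operators, and extracting decay of the non-positive quantity $a_N$ requires a delicate positivity-and-unitality analysis that essentially reproduces the contradiction scheme of Theorem~\ref{ZN0} at the level of the compound operator $U^N(\Tb^\kb-I)$. The unital hypothesis on $Z$ and the $T_i$'s enters crucially here: it allows Lemma~\ref{3.2} to convert operator-norm bounds into sharp scalar equalities, which is what powers the contradiction.
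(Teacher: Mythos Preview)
Your reduction is clean and correct: setting $U=Z\Tb^{\mb}$, $V=Z\Tb^{\mb+\kb}$, $P=ZS$, using commutativity to write $Z^M(\Tb^{\nb+\kb}-\Tb^{\nb})=\Tb^{\nb-M\mb}U^M(\Tb^{\kb}-I)$ for $\nb\ge M\mb$, and observing that $a_N:=\|U^N(\Tb^{\kb}-I)\|$ is non-increasing, is all fine. The proof hinges entirely on your last paragraph, and that is where there is a genuine gap.

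The contradiction you sketch cannot close. From $\|(V-P)z_N\|=\|(U-P)z_N\|=1-\|Pz_N\|$ you obtain only $\|(V-U)z_N\|\le 2(1-\|Pz_N\|)\le 2\|U-P\|<2$, which is consistent with any positive limit $\ell$. More seriously, the ``vector $x^*$ violating one of the strict bounds'' that you propose to construct cannot exist: since $\|U-P\|<1$ and $\|V-P\|<1$ are fixed operator-norm inequalities, no unit vector can ever make $\|(U-P)x\|$ or $\|(V-P)x\|$ equal to $1$. The contradiction mechanism in Theorem~\ref{ZN0} works because one assumes a norm is \emph{equal} to $1$ at a minimal index and then pushes that equality down to a smaller index; here you are trying to contradict a strict inequality that holds at \emph{every} index, and the scheme has nothing to bite on. Theorem~\ref{ZN0} (and your Step~1) gives $\|U^N-P^N\|<1$ for all $N$, but not uniformly bounded away from $1$, so there is no geometric decay to extract.

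The paper's proof supplies exactly the missing ingredient, and it is of a different nature from a contradiction argument. It introduces the averaging operator $\big(\tfrac{I+\Tb^{\kb}}{2}\big)^\ell$, for which one has the explicit quantitative estimate
\[
\Big\|\big(\Tb^{\kb}-I\big)\Big(\frac{I+\Tb^{\kb}}{2}\Big)^\ell\Big\|\le \frac{\gamma}{\sqrt{\ell}},
\]
and then builds an algebraic decomposition
\[
\Tb^{d\ell(\mb+\kb)}=\Big(\frac{I+\Tb^{\kb}}{2}\Big)^\ell V^{(d)}_\ell+Q_\ell^{\,d},
\]
where $ZQ_\ell$ is positive with $\|ZQ_\ell\|<1$ (this is where Theorem~\ref{ZN0} and unitality enter). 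The decay to $0$ then comes from two separate sources: $\gamma/\sqrt{\ell}$ handles the averaged part, and $\|Z^{d}Q_\ell^{\,d}\|\le\|ZQ_\ell\|^{d}\to 0$ handles the remainder. Your outline has no analogue of the $\gamma/\sqrt{\ell}$ estimate, and without it the argument does not produce any decay.
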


\begin{proof} First we note that for any positive contraction
$\Tb$  on $L_1$-spaces  \cite[p. 310]{Z1} there is $\gamma>0$ such
that
\begin{equation}\label{G-l}
\bigg\|\bigg(\frac{I+\Tb^\kb}{2}\bigg)^\ell-\Tb^\kb\bigg(\frac{I+\Tb^\kb}{2}\bigg)^\ell\bigg\|
\leq\frac{\gamma}{\sqrt{\ell}}.
\end{equation}

Now take any $\i>0$ and fix $\ell_\i\in\bn$ such that
$\gamma/\sqrt{\ell_\i}<\i/2$.

Define
$$
Q_1=\frac{1}{2}(\Tb^{\mb+\kb}-S)+\frac{1}{2}\Tb^\kb(\Tb^\mb-S).
$$
It then follows from \eqref{T1},\eqref{T2} that $ZQ_1$ is positive
and $\|ZQ_1\|<1$. Moreover, one has
$$
\Tb^{\mb+\kb}=\bigg(\frac{I+\Tb^k}{2}\bigg)S+Q_1
$$
where $I$ stands for the identity mapping.

For each $\ell\in\bn$ let us define
$$
Q_{\ell+1}=\bigg(\frac{I+\Tb^\kb}{2}\bigg)^\ell
Q_1S^\ell+\Tb^{\mb+\kb}Q_\ell, \ \ \ell\in\bn.
$$

Taking into account the positivity of $S$ and $Q_1$, one can see
that $Q_\ell$ is a positive operator on $L_1(M_{sa},\t)$ and
$ZQ_\ell=Q_\ell Z$. Moreover, one has
\begin{equation}\label{T3}
\Tb^{\ell(\mb+\kb)}=\bigg(\frac{I+\Tb^\kb}{2}\bigg)^\ell
S^\ell+Q_\ell, \ \ \ell\in\bn.
\end{equation}

Let us prove \eqref{T3} by induction. Clearly, it is valid for
$\ell=1$. Assume that \eqref{T3} is true for $\ell$, and we will
prove it for $\ell+1$. Indeed, one finds
\begin{eqnarray*}
\Tb^{(\ell+1)(\mb+\kb)}&=&\Tb^{m+k}\Tb^{\ell(\mb+\kb)}=\bigg(\frac{I+\Tb^\kb}{2}\bigg)^\ell
\Tb^{\mb+\kb} S^\ell+\Tb^{\mb+\kb} Q_\ell\\[2mm]
&=&\bigg(\frac{I+\Tb^\kb}{2}\bigg)^\ell\bigg(\bigg(\frac{I+\Tb^\kb}{2}\bigg)S+Q_1\bigg)
S^\ell+\Tb^{\mb+\kb}Q_\ell\\[2mm]
&=&\bigg(\frac{I+\Tb^\kb}{2}\bigg)^{\ell+1}
S^{\ell+1}+\bigg(\frac{I+\Tb^\kb}{2}\bigg)^\ell Q_1
S^{\ell}+\Tb^{\mb+\kb}Q_{\ell}\\[2mm]
&=&\bigg(\frac{I+\Tb^\kb}{2}\bigg)^{\ell+1} S^{\ell+1}+Q_{\ell+1}
\end{eqnarray*}
which proves the required equality.

Now let us put $V^{(1)}_\ell=S^\ell$, and
$$
V^{(d+1)}_\ell=\Tb^{\ell(\mb+\kb)}V^{(d)}_\ell+V^{(1)}_\ell
Q^d_\ell, \ \ d\in\bn.
$$

One can see that  for every $d,\ell\in\bn$  the operator
$ZV^{(d)}_\ell$ is positive, since $Z$ and $S$ are commuting.
Moreover, one has
\begin{equation}\label{T4}
\Tb^{d\ell(\mb+\kb)}=\bigg(\frac{I+\Tb^\kb}{2}\bigg)^\ell
V^{(d)}_\ell+Q^d_\ell, \ \ d,\ell\in\bn.
\end{equation}

Again let us prove the last equality by induction.  Keeping in
mind that \eqref{T4} is true for $d$, it is enough to establish
\eqref{T4} for $d+1$. Indeed, we have
\begin{eqnarray*}
\Tb^{(d+1)\ell(\mb+\kb)}&=&\Tb^{\ell(\mb+\kb)}T^{d(\mb+\kb)}=\Tb^{\ell(\mb+\kb)}\bigg(\bigg(\frac{I+\Tb^\kb}{2}\bigg)^\ell
V^{(d)}_\ell+Q^d_\ell\bigg)\\[2mm]
&=&\bigg(\frac{I+\Tb^\kb}{2}\bigg)^\ell \Tb^{\ell(\mb+\kb)}
V^{(d)}_\ell+\bigg(\bigg(\frac{I+\Tb^\kb}{2}\bigg)^\ell
S^\ell+Q_\ell\bigg)Q^d_\ell\\[2mm]
&=&\bigg(\frac{I+\Tb^\kb}{2}\bigg)^{\ell}\bigg(\Tb^{\ell(\mb+\kb)}V^{(d)}_\ell+V^{(1)}_\ell
Q^d_\ell\bigg)+Q^{d+1}_\ell\\[2mm]
&=&\bigg(\frac{I+\Tb^\kb}{2}\bigg)^{\ell}
V^{(d+1)}_\ell+Q^{d+1}_{\ell}
\end{eqnarray*}
which proves \eqref{T4}.

From  $Z^*(\id)=\Tb^*(\id)=\id$, it follows from  \eqref{T4} that
$$
V^{(d)*}_\ell(\id)+Q^{*d}_\ell(\id)=\id.
$$
Now the positivity of $ZV^{(d)}_\ell$ and $ZQ_\ell$ imply that
$\|ZV^{(d)}_\ell\|\leq 1$ and $\|ZQ_\ell\|\leq 1$.

From \eqref{T1} and \eqref{T2}, due to Theorem \ref{ZN0}, one
finds that $\|Z(\Tb^{\ell \mb}-S^\ell)\|<1$ for all $\ell\in\bn$.
Using this inequality with $\Tb^*(\id)=\id$ and the positivity of
$Z(\Tb^{\ell \mb}-S^{\ell})$ we find that
\begin{equation}\label{TSl} \|Z(\Tb^{\ell
\mb}-S^{\ell})\|=\|((\Tb^*)^{\ell
\mb}-S^{*\ell})Z^*\|=\|\id-S^{*\ell}(\id)\|<1.
\end{equation}

The equality \eqref{T3} yields that
$$
Q_\ell^*(\id)=\id-S^{*\ell}(\id).
$$
Hence, from \eqref{TSl}  with the positivity of $ZQ_\ell$ we
obtain
$$
\|ZQ_\ell\|=\|Q_\ell^*(\id)\|=\|\id-S^{*\ell}(\id)\|<1
$$
for all $\ell\in\bn$.

Therefore, there is a number $d_\i\in\bn$ such that
$\|(ZQ_{\ell_\i})^{d_\i}\|<\frac{\i}{4}$. From the commutativity
of $Z$ and $Q_\ell$ one finds
\begin{equation}\label{T7}
\|Z^{d_\i}Q_{\ell_\i}^{d_\i}\|<\frac{\i}{4}.
\end{equation}

Now putting $\nb_0=d_\i\ell_\i(\mb+\kb)$, from \eqref{T4} with
\eqref{T7} we obtain
\begin{eqnarray*}
\|Z^{d_\i}(\Tb^{\nb_0+\kb}-\Tb^{\nb_0})\|&=&\|Z^{d_\i}\big(\Tb^{d_\i\ell_\i(\mb+\kb)+\kb}-\Tb^{d_\i\ell_\i(\mb+\kb)})\|\\[2mm]
&\leq&
\bigg\|Z^{d_\i}\bigg(\Tb^\kb\bigg(\frac{I+\Tb^\kb}{2}\bigg)^{\ell_\i}-\bigg(\frac{I+\Tb^\kb}{2}\bigg)^{\ell_\i}\bigg)\bigg)V^{(d_\i)}_{\ell_\i}\bigg\|\\[2mm]
&&+ \|Z^{d_\i}Q^{d_\i}_{\ell_\i}(\Tb^\kb-I)\|\\[2mm]
&\leq&
\bigg\|\Tb^\kb\bigg(\frac{I+\Tb^\kb}{2}\bigg)^{\ell_\i}-\bigg(\frac{I+\Tb^\kb}{2}\bigg)^{\ell_\i}\bigg\|\\[2mm]
&&+ 2\|Z^{d_\i}Q^{d_\i}_{\ell_\i}\|\\[2mm]
&\leq&\frac{\g}{\sqrt{\ell_\i}}+2\cdot\frac{\i}{4}<\i.
\end{eqnarray*}

Take any $\nb\geq \nb_0$ then from the last inequality one gets
$$
\|\Tb^{\nb+\kb}-\Tb^{\nb}\|=\|\Tb^{\nb-\nb_0}(\Tb^{\nb_0+\kb}-\Tb^{\nb_0})\|\leq
\|\Tb^{\nb_0+\kb}-\Tb^{\nb_0}\|<\i
$$
which completes the proof.
\end{proof}

\begin{cor}\label{1-3} Assume that
$T_k:L_1(M_{sa},\t)\to L_1(M_{sa},\t)$, ($k=1,\dots,d$) be unital
positive contractions such that $T_iT_j=T_jT_i$, for every
$i,j\in\{1,\dots,d\}$. If there are $\mb\in\bn_0^d$,
$\kb\in\bn^d_0$ and a positive contraction $S:L_1(M_{sa},\t)\to
L_1(M_{sa},\t)$ such that
\begin{eqnarray}\label{T1} &&\Tb^{\mb+\kb}\geq
S, \ \
\Tb^{\mb}\geq S \ \ \textrm{with}\\[2mm]\label{T2}
&& \|\Tb^{\mb+\kb}-S\|<1, \ \ \|\Tb^{\mb}- S\|<1.
\end{eqnarray}
then one has
$$
\lim_{\nb\to\infty}\|\Tb^{\nb+\kb}-\Tb^{\nb}\|=0.
$$
\end{cor}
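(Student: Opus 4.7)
The plan is to derive Corollary~4.2 as an immediate consequence of Theorem~4.1 by specializing $Z$ to be the identity operator $I$ on $L_1(M_{sa},\t)$. No genuine obstacle is expected, since the statement of Theorem~4.1 appears to have been designed precisely to contain this corollary as the case $Z=I$; the entire proof reduces to verifying that the hypotheses transfer.

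First, I would check that $I$ fulfills the role assigned to $Z$ in Theorem~4.1. The identity is visibly a unital positive contraction (it satisfies $I^*\id=\id$ and $\|I\|=1$), and it commutes with every bounded operator, so the requirements $ZT_i=T_iZ$ (for all $i\in\{1,\dots,d\}$) and $SZ=ZS$ hold automatically. With $Z=I$, the domination conditions $Z\Tb^{\mb+\kb}\geq ZS$ and $Z\Tb^{\mb}\geq ZS$ of Theorem~4.1 collapse to the assumed relations $\Tb^{\mb+\kb}\geq S$ and $\Tb^{\mb}\geq S$, while the norm estimates $\|Z(\Tb^{\mb+\kb}-S)\|<1$ and $\|Z(\Tb^{\mb}-S)\|<1$ reduce to the assumed $\|\Tb^{\mb+\kb}-S\|<1$ and $\|\Tb^{\mb}-S\|<1$. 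All hypotheses of Theorem~4.1 are thus met.

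Applying Theorem~4.1, for each $\i>0$ one obtains $M\in\bn$ and $\nb_0\in\bn_0^d$ such that $\|I^M(\Tb^{\nb+\kb}-\Tb^{\nb})\|<\i$ whenever $\nb\geq\nb_0$. Since $I^M=I$, this reads $\|\Tb^{\nb+\kb}-\Tb^{\nb}\|<\i$ for every $\nb\geq\nb_0$, which is exactly the statement $\lim_{\nb\to\infty}\|\Tb^{\nb+\kb}-\Tb^{\nb}\|=0$ under the multi-index ordering on $\bn_0^d$. This completes the proof.
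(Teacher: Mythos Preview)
Your proof is correct and follows exactly the paper's own approach: the paper simply states that the corollary follows immediately from Theorem~\ref{1-2} by taking $Z=Id$. Your additional verification that $I$ satisfies all the hypotheses on $Z$ and that $I^M=I$ makes the argument fully explicit, but there is nothing to add.
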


The proof immediately follows from Theorem \ref{1-2} if one takes
$Z=Id$.

\begin{rem} We note
that in \cite{GMB2016,NT} a similar kind of result, for a single
contractions of $C^*$-algebras, has been proved. Our main result
extents it for more general multi-parametric contractions. We point
out that if the algebra becomes commutative, then the proved
theorems cover main results of \cite{Mu2015}.
\end{rem}

\begin{cor}\label{1-4} Let $T,S:L_1(M_{sa},\t)\to L_1(M_{sa},\t)$ be two commuting unital positive contractions.
If for some $m_0\in\bn$ and a positive contraction
$S:L_1(M_{sa},\t)\to L_1(M_{sa},\t)$ one has
\begin{eqnarray*} &&T^{m_0+k}S^{m_0}\geq
S, \ \
T^{m_0}S^{m_0}\geq S \ \ \textrm{with}\\[2mm]\label{T2}
&& \|T^{m_0+k}S^{m_0}-S\|<1, \ \ \|T^{m_0}S^{m_0}- S\|<1.
\end{eqnarray*}
then
$$
\lim_{n,m\to\infty}\|T^{n+k}S^{m}-T^{n}S^{m}\|=0.
$$
\end{cor}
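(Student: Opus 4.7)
The plan is to obtain this corollary as the two-parameter specialization of Corollary \ref{1-3}, with a minor relabeling to disambiguate the letter $S$, which in the statement plays two different roles. I would rename the auxiliary positive contraction appearing in the hypotheses as $R$, so that $T$ and $S$ remain the two commuting unital positive contractions.

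I would set $d=2$, $T_1:=T$, $T_2:=S$, $\mb:=(m_0,m_0)\in\bn_0^2$, and $\kb:=(k,0)\in\bn_0^2$. Since $T_1T_2=T_2T_1$ by assumption and both $T_i$ are unital positive contractions, the hypotheses of Corollary \ref{1-3} on the family $(T_1,T_2)$ are met. With these choices,
$$
\Tb^{\mb}=T^{m_0}S^{m_0},\qquad \Tb^{\mb+\kb}=T^{m_0+k}S^{m_0},
$$
so the four assumptions of the statement translate verbatim into
$$
\Tb^{\mb+\kb}\geq R,\quad \Tb^{\mb}\geq R,\quad \|\Tb^{\mb+\kb}-R\|<1,\quad \|\Tb^{\mb}-R\|<1,
$$
which are exactly the conditions \eqref{T1}--\eqref{T2} required to apply Corollary \ref{1-3} with auxiliary contraction $R$.

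Corollary \ref{1-3} then yields $\|\Tb^{\nb+\kb}-\Tb^{\nb}\|\to 0$ as $\nb\to\infty$ in $\bn_0^2$. Writing $\nb=(n,m)$, commutativity of $T$ and $S$ gives $\Tb^\nb=T^nS^m$ and $\Tb^{\nb+\kb}=T^{n+k}S^m$, so this limit is precisely $\lim_{n,m\to\infty}\|T^{n+k}S^m-T^nS^m\|=0$, as desired. There is no substantive obstacle here: the entire analytical work has been done in Theorem \ref{1-2} and its specialization Corollary \ref{1-3}, and the only care needed is the renaming step to avoid the clash between the two meanings of $S$ in the statement.
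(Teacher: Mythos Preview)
Your proof is correct and matches the paper's own argument: the paper also deduces the corollary immediately from Corollary~\ref{1-3} by taking $\mb=(m_0,m_0)$ and $\kb=(k,0)$. Your added remark about renaming the auxiliary contraction to avoid the notational clash with the letter $S$ is a sensible clarification that the paper omits.
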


The proof immediately follows from Corollary \ref{1-3} if one takes
$\mb=(m_0,m_0)$ and $\kb=(k,0)$.

\begin{rem} Since the dual of $L_1(M_{sa},\t)$ is $M_{sa}$ then due to the
duality theory the proved Theorem \ref{1-2} holds true if we
replace $L_1$-space with $M_{sa}$.
\end{rem}

\section{Noncommutative $L_1$-space associated with center valued trace}

In this section we recall some necessary notions and facts about the
noncommutative $L_1$-spaces associated with center valued trace.

Let $M$ be any finite von Neumann algebra, $S(M)$ be the set all
measurable operators affiliated to $M$ (see \cite{S} for
definitions). Let $Z$ be some subalgebra of the center $Z(M)$. Then
one may identify $Z$ with $*$-algebra $L_\infty(\Omega,\Sigma,m)$
and do $S(Z)$ with $L_0(\Omega,\Sigma,m)$. Recall that \textit{a
center valued (i.e. $Z$-valued) trace} on the von Neumann algebra
$M$ is a $Z$-linear mapping $\Phi:M\to Z$ with
$\Phi(x^*x)=\Phi(xx^*)\geq 0$ for all $x\in M$. It is clear that
$\Phi(M_+)\subset Z_+$. A trace $\Phi$ is said to be
\textit{faithful} if the equality $\Phi(x^*x)=0$ implies $x=0$,
\textit{normal} if $\Phi(x_{\alpha})\uparrow\Phi(x)$ for every
$x_{\alpha},x\in M_{sa}$, $x_{\alpha}\uparrow x$. Note that the
existence of such kind of traces has been studied in \cite{CZ1}.

Let $M$ be an arbitrary finite von Neumann algebra, $\Phi$ be a
center-valued trace on $M$. The locally measure topology $t(M)$ on
$S(M)$ is  the linear (Hausdorff) topology whose fundamental system
of neighborhoods of $0$ is given by $$ V(B,\varepsilon, \delta ) =
\{x\in S(M)\colon \ \mbox{there exists } \ p\in P(M), z\in P(Z(M))
$$  $$ \mbox{ such that} \ xp\in M, \|xp\|_{M}\leq\varepsilon, \
z^\bot \in W(B,\varepsilon,\delta), \ \Phi_M(zp^\bot)\leq\varepsilon
z\},$$ where $\|\cdot\|_{M}$ is the $C^*$-norm in $M.$  It is known
that $(S(M),t(M))$ is a complete topological $*$-algebra \cite{Y}.

From \cite[\S 3.5]{Mur_m} we have the following criterion for
convergence  in the topology $t(M). $

\begin{prop}\label{2.1.} A net $\{x_\alpha\}_{\alpha\in A} \subset S(M)$ converges to zero in the topology $t(M)$   if and only if $\Phi_M(E^\bot_\lambda (|x_\alpha|) \stackrel{t(M)}{\longrightarrow} 0$ for any $\lambda>0.$
\end{prop}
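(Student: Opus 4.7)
The plan is to reduce the claim to a direct manipulation of the spectral projections of $|x_\alpha|$, exploiting the fact that the defining neighborhoods $V(B,\varepsilon,\delta)$ are phrased precisely in terms of cutting $x_\alpha$ by a projection $p$ under which $xp$ becomes small in norm, with $p^\perp$ small in the central trace. The most economical choice of such a cut is a spectral projection of $|x_\alpha|$, so the equivalence should fall out by matching the two descriptions in a symmetric way for the two implications.

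For the sufficiency direction, assume $\Phi_M(E^\bot_\lambda(|x_\alpha|))\stackrel{t(Z)}{\longrightarrow} 0$ for every $\lambda>0$. Given $\varepsilon,\delta>0$, I would set $p_\alpha=I-E^\bot_\varepsilon(|x_\alpha|)$, i.e.\ the spectral projection of $|x_\alpha|$ on $[0,\varepsilon]$. Then $x_\alpha p_\alpha\in M$ and $\|x_\alpha p_\alpha\|_M^2=\|p_\alpha|x_\alpha|^2 p_\alpha\|_M\leq\varepsilon^2$. Using the convergence $\Phi_M(p_\alpha^\perp)\stackrel{t(Z)}{\longrightarrow} 0$ and the identification $Z\cong L_\infty(\Omega,\Sigma,m)$, one can then take $z_\alpha$ to be the characteristic projection of the set where $\Phi_M(p_\alpha^\perp)\leq\varepsilon$; this produces $\Phi_M(z_\alpha p_\alpha^\perp)\leq\varepsilon z_\alpha$ and ensures that $z_\alpha^\perp$ eventually lies in any prescribed central neighborhood, so that $x_\alpha\in V(B,\varepsilon,\delta)$ for $\alpha$ large.

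For the necessity direction, I fix $\lambda>0$ and pick $\varepsilon_0<\lambda$. By definition of $t(M)$-convergence, eventually there are $p_\alpha\in P(M)$ and $z_\alpha\in P(Z)$ with $\|x_\alpha p_\alpha\|_M\leq\varepsilon_0$, $z_\alpha^\perp$ central-small, and $\Phi_M(z_\alpha p_\alpha^\perp)\leq\varepsilon_0 z_\alpha$. The key elementary fact is that $p_\alpha\wedge E^\bot_\lambda(|x_\alpha|)=0$: on a unit vector $\xi$ in the range of this meet one would obtain simultaneously $\xi^*|x_\alpha|^2\xi\leq\varepsilon_0^2$ (from $\|x_\alpha p_\alpha\|_M\leq\varepsilon_0$) and $\xi^*|x_\alpha|^2\xi\geq\lambda^2$ (from the definition of $E^\bot_\lambda$), which contradicts $\varepsilon_0<\lambda$. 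Applying Kaplansky's formula in the projection lattice of $M$ yields $E^\bot_\lambda(|x_\alpha|)\sim E^\bot_\lambda(|x_\alpha|)\vee p_\alpha - p_\alpha\leq p_\alpha^\perp$, whence $\Phi_M(E^\bot_\lambda(|x_\alpha|))\leq\Phi_M(p_\alpha^\perp)$ in $Z_+$. Multiplying by $z_\alpha$ gives the pointwise estimate $z_\alpha\Phi_M(E^\bot_\lambda(|x_\alpha|))\leq\varepsilon_0 z_\alpha$, which combined with the central smallness of $z_\alpha^\perp$ delivers $\Phi_M(E^\bot_\lambda(|x_\alpha|))\stackrel{t(Z)}{\longrightarrow} 0$.

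The main obstacle I anticipate is the careful $Z$-valued bookkeeping: one must verify that the scalar estimates really pass to inequalities in $Z_+$ and that the central cutting projections $z_\alpha$ remain compatible with the projection lattice computations inside $M$, in particular with the equivalence delivered by Kaplansky's formula. The $Z$-linearity and normality of $\Phi_M$ are the algebraic ingredients that make this step rigorous; beyond that, the proof is essentially a translation between two equivalent descriptions of ``smallness'' of $x_\alpha$.
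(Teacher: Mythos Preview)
The paper does not supply its own proof of this proposition; it simply quotes the result from \cite[\S 3.5]{Mur_m} (Muratov--Chilin) and moves on. There is therefore nothing to compare your argument against.

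That said, your outline is the standard one and is essentially correct. For sufficiency, the spectral cut $p_\alpha=E_\varepsilon(|x_\alpha|)$ is indeed the canonical choice, and the passage to a central projection $z_\alpha=\chi_{\{\Phi_M(p_\alpha^\perp)\leq\varepsilon\}}$ via the identification $Z\cong L_\infty(\Omega)$ gives exactly the data required for the neighborhood $V(B,\varepsilon,\delta)$. For necessity, the disjointness $p_\alpha\wedge E^\bot_\lambda(|x_\alpha|)=0$ and the subsequent use of Kaplansky's formula are the right moves; just note that the domain issue for unbounded $|x_\alpha|$ is harmless here because any vector in the range of the meet is automatically in $\operatorname{ran}p_\alpha\subset\operatorname{dom}(x_\alpha)=\operatorname{dom}(|x_\alpha|)$, so both quadratic-form bounds make sense simultaneously. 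The invariance of the center-valued trace under Murray--von Neumann equivalence then yields $\Phi_M(E^\bot_\lambda(|x_\alpha|))\leq\Phi_M(p_\alpha^\perp)$ in $Z_+$, and the rest of the $t(Z)$-bookkeeping is routine. Your concern about the $Z$-valued inequalities commuting with the central cuts is legitimate but resolved by the $Z$-linearity of $\Phi_M$ and the fact that $z_\alpha$ is central, so $z_\alpha\Phi_M(p_\alpha^\perp)=\Phi_M(z_\alpha p_\alpha^\perp)$.
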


Following \cite{CZ} an operator  $x\in S(M)$ is said to be {\em
$\Phi$-integrable} if there exists a sequence $\{x_n\} \subset M$
such that $x_n \stackrel{t(M)}{\to} x $ and $\|x_n-x_m\|_\Phi
\stackrel{t(Z)}{\longrightarrow} 0$ as $n,m \to \infty.$

Let $x$ be a $\Phi$-integrable operator from $ S(M).$ Then  there
exists a $\widehat{\Phi}(x)\in S(Z)$ such that $\Phi(x_n)
\stackrel{t(Z)}{\longrightarrow}\widehat{\Phi}(x).$ In addition
$\widehat{\Phi}(x)$ does not depend on the choice of a sequence
$\{x_n\}\subset M,$ for which
$x_n\stackrel{t(M)}{\longrightarrow}x,$ $\Phi(|x_n-x_m|)
\stackrel{t(Z)}{\longrightarrow}0$ \cite{CZ}. It is clear that each
operator $x\in M$ is $\Phi$-integrable and $\widehat{\Phi}
(x)=\Phi(x).$

Denote by $L_1(M,\Phi)$ the set of all $\Phi$-integrable operators
from $S(M).$ If $x\in S(M)$ then $x\in L_1(M,\Phi)$ iff $|x|\in
L_1(M,\Phi),$  in addition $|\widehat{\Phi}(x)|\leq
\widehat{\Phi}(|x|)$ \cite{CZ1}. For any $x\in L_1(M,\Phi),$ set
$\|x\|_{1,\Phi}=\widehat{\Phi}(|x|).$ It is known that $L_1(M,\Phi)$
is a linear subspace of $S(M),$ $ML_1(M,\Phi)M \subset L_1(M,\Phi),$
and $x^*\in L_1(M,\Phi)$ for all $x\in L_1(M,\Phi)$ \cite{CZ1}.

Now let us recall some facts about Banach--Kantorovich spaces over
the ring of measurable functions \cite{G1}.

Let $X$ be a mapping which maps every point $\omega\in \Omega$
    to some Banach space  $(X(\omega),\|\cdot\|_{X(\omega)})$.  In what follows,
    we assume that $X(\omega)\neq \{0\}$  for all
    $\omega\in \Omega.$
A function $u$
  is said to be a \textit{section} of $X$,
   if it is defined almost everywhere in  $\Omega$
    and takes its value $u(\omega)\in X(\omega)$
      for   $\omega\in dom(u),$
      where  $\omega\in dom(u)$ is the domain of
      $u.$ Let   $L$ be some set of sections.

\begin{defn} \cite{G1}. A pair  $(X, L)$ is said to be
a {\it measurable bundle
 of Banach spaces} over $\Omega$   if
\begin{enumerate}
\item[1.]  $\lambda_1 c_1+\lambda_2 c_2\in L$
  for all  $\lambda_1, \lambda_2\in \mathbb{R}$
   and $c_1, c_2\in L,$ where
   $\lambda_1 c_1+\lambda_2 c_2:\omega\in dom(c_1)\cap
   dom(c_2) \rightarrow \lambda_1 c_1(\omega)+\lambda_2 c_2(\omega);$

\item[2.]  the function $||c||:\omega\in  dom(c)\rightarrow
||c(\omega)||_{X(\omega)}$
  is measurable for all $c\in L;$

\item[3.]  for every $\omega\in \Omega$
  the set  $\{c(\omega): c\in L, \omega\in dom(c)\}$
   is dense in $X(\omega).$
\end{enumerate}
   \end{defn}

   A section $s$ is a step-section, if there are pairwise disjoint sets
  $A_1,A_2,\ldots,A_n\in\Sigma$ and sections
 $c_1,c_2,\ldots,c_n\in L$ such that $\bigcup\limits_{i=1}^n
 A_i=\Omega$ è $s(\omega)=\sum\limits_{i=1}^n
 \chi_{A_i}(\omega)c_i(\omega)$ for almost all $\omega\in\Omega$.

 A section $u$ is measurable, if for any $A\in\Sigma$
 there is a sequence $s_n$ of step-sections such that
 $s_n(\omega)\rightarrow u(\omega)$ for almost all $\omega\in A$.

 Let $M(\Omega,X)$ be the set of all measurable sections. By symbol
 $L_0(\Omega,X)$  we denote factorization of the $M(\Omega,X)$ with respect to almost everywhere equality.
 Usually, by $\hat{u}$ we denote a class from $L_0(\Omega,X)$, containing a section $u\in M(\Omega,X)$,
 and by
 $\|\hat{u}\|$ we denote an element of $L_0(\Omega)$,
 containing $\|u(\omega)\|_{X(\omega)}$.
Let ${\mathcal L^{\infty}}(\Omega,X)=\{u\in
M(\Omega,X):\|u(\omega)\|_{X(\omega)}\in \mathcal
L^{\infty}(\Omega)\}$ and $L^{\infty}(\Omega,X)=\{\widehat{u}\in
L_0(\Omega,X): \|\widehat{u}\|\in L^{\infty}(\Omega)\}.$  One can
define the spaces $\mathcal {L^{\infty}}(\Omega,X)$ and
$L^{\infty}(\Omega,X)$ with real-valued norms $\|u\|_{\mathcal
L^{\infty}(\Omega,X)}=\sup\limits_{\omega\in
\Omega}|u(\omega)|_{X(\omega)}$ and
$\|\widehat{u}\|_{\infty}=\bigg\|\|\widehat{u}\|\bigg\|_{L^{\infty}(\Omega)},$
respectively.

 \begin{defn} Let $X,Y$ be measurable bundles of Banach spaces. A set linear operators $\{T(\omega) : X(\omega)\rightarrow
 Y(\omega)\}$ is called \textit{measurable bundle of linear operators} if
 $T(\omega)(u(\omega))$ is measurable section for any measurable
 section $u$.
\end{defn}

Let $(X,L)$ be a measurable bundle of Banach spaces. If each
$X(\omega)$ is a noncommutative $L_1$-space, i.e.
$X(\omega)=L_1(M(\omega),\tau_\omega)$, associated with finite von
Neumann algebras $M(\omega)$ and with strictly normal numerical
trace $\tau_\omega$ on
 $M(\omega)$, then  the measurable bundle  $(X,L)$ of Banach spaces is
 called \textit{ measurable bundle of noncommutative  $L_1$-spaces}.

 \begin{thm} \label{1.2}\cite{GaC} There exists a measurable bundle $(X,L)$
 of noncommutative  $L_1$-spaces $L_1(M(\omega),\tau_\omega)$,
 such that  $L_0(\Omega,X)$ is Banach~---Kantorovich $*$-algebroid,
 which is isometrically and order  $*$-isomorph to  $L_1(M,\Phi)$. Moreover,  the isometric and
  order  $*$-isomorphism $H: L_1(M,\Phi)\rightarrow L_0(\Omega,X)$ can be chosen with the following properties\\
  \begin{enumerate}
\item[(a)] $\Phi(x)(\omega) = \tau_\omega(H(x)(\omega))$ for all
 $x\in M$  and for almost all
 $\omega\in\Omega$;\\
 \item[(b)] $x\in M$ if and only if  $H(x)(\omega)\in
 M(\omega)$ a.e. and there exist positive number
 $\lambda>0$,
 that $\|H(x)(\omega)\|_{M(\omega)}\leq\lambda$ for almost all $\omega$;\\
 \item[(c)] $z\in Z$ if and only if
 $H(z)=(\widehat{z(\omega)\mathbf{1}_\omega)}$ for some
 $\widehat{z(\omega)}\in L_\infty(\Omega)$, where $\mathbf{1}_\omega$~---
 unit of algebra  $M(\omega)$, in particular
 $H(\mathbf{1})(\omega)=
 \mathbf{1}_\omega$ for almost all $\omega$.\\
 \item[(d)] the section $(H(x)(\omega))^*$ is measurable for all  $x\in L_1(M,\Phi)$.\\
 \item[(e)] the section $H(x)(\omega)\cdot H(y)(\omega)$ is measurable for all  $x,y\in M$.
 \end{enumerate}
 \end{thm}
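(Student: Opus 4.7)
The plan is to construct the bundle $(X,L)$ via the \emph{central decomposition} of the finite von Neumann algebra $M$ over the central subalgebra $Z\cong L_{\infty}(\Omega,\Sigma,m)$. By the direct integral theorem for finite von Neumann algebras over a central subalgebra, there exists a measurable field $\{M(\omega)\}_{\omega\in\Omega}$ of finite von Neumann algebras together with a $*$-isomorphism $M\cong\int_{\Omega}^{\oplus}M(\omega)\,dm(\omega)$ under which each $x\in M$ corresponds to an essentially bounded measurable section $\omega\mapsto x(\omega)\in M(\omega)$. The center-valued trace $\Phi$ disintegrates accordingly: $Z$-linearity, faithfulness, and normality of $\Phi$ propagate to a faithful normal finite numerical trace $\tau_{\omega}$ on $M(\omega)$ for a.e.\ $\omega$ satisfying $\Phi(x)(\omega)=\tau_{\omega}(x(\omega))$, which is exactly property (a). Define the preliminary map $H_0:M\to\prod_{\omega}M(\omega)$ by $H_0(x)(\omega):=x(\omega)$.

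Next, set $X(\omega):=L_{1}(M(\omega),\tau_{\omega})$ and let $L$ be the set of step-sections generated by $H_0(M_0)$, where $M_0\subset M$ is a countable $*$-subalgebra, $\sigma$-weakly dense in $M$ and a module over some countable $*$-subalgebra $Z_0\subset Z$ that is norm-dense in $Z$. The first two bundle axioms are immediate from the disintegration, since the norm function is $\omega\mapsto\|c(\omega)\|_{X(\omega)}=\tau_{\omega}(|c(\omega)|)=\Phi(|c|)(\omega)$, which lies in $L_0(\Omega)$. For the density axiom (3) at \emph{every} fiber, I would invoke a lifting $\rho:L_{\infty}(\Omega,\Sigma,m)\to\mathcal{L}_{\infty}(\Omega,\Sigma,m)$ to select canonical pointwise representatives of the countable generating family simultaneously, then apply a fiberwise Kaplansky-type argument to upgrade $\sigma$-weak density of $M_0(\omega)\subset M(\omega)$ to $L_{1}$-density.

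Then I would extend $H$ from $M$ to $L_{1}(M,\Phi)$ by approximation: given $x\in L_{1}(M,\Phi)$, pick $\{x_n\}\subset M$ with $x_n\stackrel{t(M)}{\to}x$ and $\|x_n-x_m\|_{\Phi}\stackrel{t(Z)}{\to}0$. By Proposition~\ref{2.1.} applied to $x_n-x_m$, after passing to a subsequence the sections $H(x_n)(\omega)$ are Cauchy in $L_{1}(M(\omega),\tau_{\omega})$ for almost every $\omega$; define $H(x)(\omega)$ to be this a.e.\ limit. Independence from the choice of sequence and measurability of the limit section follow from a standard diagonal argument together with Proposition~\ref{2.1.}. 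Passing to the limit in $\tau_\omega(|H(x_n)(\omega)|)=\Phi(|x_n|)(\omega)$ yields the isometry $\|H(x)\|=\widehat{\Phi}(|x|)=\|x\|_{1,\Phi}$, and linearity, order preservation, and the $*$-operation pass to the limit since everything is defined fiberwise. The verification that the image is all of $L_0(\Omega,X)$ uses that step-sections from $L$ exhaust a dense subset fiberwise, combined with completeness of $L_0(\Omega,X)$.

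Properties (b)--(e) then reduce to pointwise checks on the direct integral realization: (b) says that essentially bounded sections correspond exactly to elements of $M\hookrightarrow L_{1}(M,\Phi)$; (c) follows because elements of $Z$ act on each fiber as scalar multiples of $\id_{\omega}$; and (d), (e) are immediate since the $*$-operation and multiplication in $\int^{\oplus}M(\omega)\,dm(\omega)$ are pointwise, hence preserve measurability of sections. The main technical obstacle is axiom (3) holding at \emph{every} $\omega\in\Omega$ rather than merely almost everywhere---this is the standard ``null-set'' pathology in disintegration theory, resolved by the lifting device mentioned above; a secondary difficulty is verifying that the fiberwise a.e.\ limits organize into a single genuinely measurable section, which is where Proposition~\ref{2.1.} and the compatibility of $t(M)$-convergence with the disintegration do the crucial work.
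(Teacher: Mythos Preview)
The paper does not prove this theorem; it is quoted from \cite{GaC} (the citation appears in the theorem heading itself) and no proof is supplied here. So there is no ``paper's own proof'' to compare your proposal against.

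That said, a brief remark on how your outline relates to the construction actually used in \cite{GaC}, which is visible in the present paper through the proof of Theorem~\ref{1.3}. You propose to invoke the abstract direct-integral decomposition of $M$ over $Z$, obtain the fibers $M(\omega)$ and traces $\tau_\omega$ from disintegration theory, and then repair the ``every $\omega$ vs.\ a.e.\ $\omega$'' pathology after the fact with a lifting. The construction in \cite{GaC} proceeds in the opposite order: one fixes a scalar lifting $\rho$ on $L_\infty(\Omega)$ \emph{first}, defines a genuine (not a.e.) numerical trace $\varphi_\omega(x)=\rho(\Phi_0(x))(\omega)$ on $M$ for each $\omega\in\Omega$, and then performs a GNS-type construction pointwise---quotienting $M$ by the null ideal $I_\omega=\{x:\varphi_\omega(x^*x)=0\}$, completing to a Hilbert algebra, and taking $M(\omega)$ to be the von Neumann algebra generated by the left-regular representation, with $\tau_\omega$ the canonical trace. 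This builds the bundle so that axiom~(3) holds at every $\omega$ by design, and the map $H$ is the pointwise quotient map $x\mapsto \gamma_\omega(x)$ rather than an a.e.\ limit of approximants. Your route would also work, but it leans on the general direct-integral existence theorem and then has to clean up null sets; the \cite{GaC} route is more explicit and avoids that appeal.
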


\section{The existence of lifting}

In this section we establishes the existence of the lifting in a
non-commutative setting. Note that in case of $C^*$-algebras, the
existence of the lifting has been given in \cite{GC1} (see also
\cite{GM1}).

Let $M$ be a von Neumann algebra. Then it can be identify with a
linear subspace of $L^\infty(\Omega,X)$ by the isomorphism $H$,
since if $x\in M$, then one has
$$\|H(x)\|_{L_0(\Omega,X)}=\|x\|_1=\Phi(|x|)\in L^\infty(\Omega)$$

\begin{thm} \label{1.3} There exists a mapping  $\ell: M(\subset
L^\infty(\Omega,X))\rightarrow \mathcal{L}^\infty(\Omega,X)$ with
following properties
 \begin{enumerate}
\item[(a)]  for every $x\in M $ one has $\ell(x)\in x,\ {\rm dom}\ \ell (x)=\Omega$;\\
\item[(b)] if $x_1, x_2\in M$ and $\lambda_1, \lambda_2\in
\mathbb{R}$, then
 $\ell(\lambda_1 x_1+ \lambda_2 x_2)=\lambda_1\ell(x_1)+\lambda_2\ell(x_2)$;\\
\item[(c)]
 $\|\ell(x)(\omega)\|_{L_p(M(\omega),\tau_\omega)}=p(\|x\|_p)(\omega)$
 for all $x\in M$ and for all  $\omega\in\Omega$;\\
\item[(d)] if $x\in M,\ \lambda\in
 L^\infty(\Omega)$, then
 $\ell(ex)=p(e)\ell(x)$;\\
\item[(e)] if $x\in M$, then
 $\ell(x^*)=\ell(x)^*$;\\
\item[(f)] if $x,y\in M$, then
 $\ell(xy)=\ell(x)\ell(y)$;\\
 \item[(g)] the set $\{\ell(x)(\omega) : x\in M\}$ is dense
  in $L_p(M(\omega),\tau_\omega)$ for all
 $\omega\in\Omega$.
 \end{enumerate}
 \end{thm}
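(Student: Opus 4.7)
The plan is to construct $\ell$ by adapting the classical Maharam--von Neumann lifting theorem to the present vector-valued non-commutative setting, following the scheme of \cite{GC1}. The starting point is a scalar lifting $\rho:L^{\infty}(\Omega,\Sigma,m)\to\mathcal{L}^{\infty}(\Omega,\Sigma,m)$, which is unital, linear, multiplicative, and satisfies $\rho(f)\in f$; such a $\rho$ is classical. Using the isomorphism $H$ of Theorem \ref{1.2}, one identifies $M$ with a unital $*$-subalgebra of $L^{\infty}(\Omega,X)$ so that $\|H(x)\|=\Phi(|x|)\in L^{\infty}(\Omega)$ for every $x\in M$. The task then becomes to pick, for each $x\in M$, a specific measurable section representing $H(x)$ in a way that respects the linear, algebraic, $*$- and $Z$-module structure pointwise.

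First, I would handle the centre: by part (c) of Theorem \ref{1.2}, for $z\in Z$ the section $H(z)$ is of the form $\widehat{z(\omega)\mathbf{1}_{\omega}}$, so the formula $\ell(z)(\omega):=\rho(\widehat{z(\omega)})(\omega)\mathbf{1}_{\omega}$ defines the lifting on $Z$ with all the required properties. Next, consider the family $\mathcal{F}$ of pairs $(N,\psi)$, where $N$ is a unital $*$-subalgebra of $M$ containing $Z$ and $\psi:N\to\mathcal{L}^{\infty}(\Omega,X)$ satisfies (a)--(f) and extends the lifting already constructed on $Z$. The family $\mathcal{F}$ is partially ordered by extension, and every chain is bounded above, so Zorn's Lemma yields a maximal element $(N_{0},\ell_{0})$. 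I would then show $N_{0}=M$ by arguing that, if $x\in M\setminus N_{0}$, then the $*$-subalgebra generated by $N_{0}$ and $x$ admits an extension of $\ell_{0}$: choose any measurable section $s$ representing $H(x)$ with $\|s(\omega)\|\le p(\|x\|)(\omega)$ everywhere, and modify $s$ on a null set using $\rho$ applied to all products $\Phi(s^{*}y)$, $y\in N_{0}$ (which lie in $L^{\infty}(\Omega)$ by part (e) of Theorem \ref{1.2}); routine diagonalisation over a countable $*$-subalgebra of $N_{0}$ together with $\rho$ being a homomorphism yields consistency of the algebraic relations pointwise. Finally, (g) follows because the set $\{H(x)(\omega):x\in M\}$ is dense in $X(\omega)=L_{p}(M(\omega),\tau_{\omega})$ for almost every $\omega$ by clause~3 of the definition of a measurable bundle, and the lifting only alters sections on a null set.

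The main obstacle is the simultaneous preservation of multiplicativity (f) and $*$-structure (e) when extending across uncountably many elements, because a priori $\ell_{0}(x)\ell_{0}(y)$ agrees with $\ell_{0}(xy)$ only almost everywhere, whereas (f) demands pointwise equality on all of $\Omega$. The way around this, mirroring \cite{GC1}, is to first build a lifting on a fixed $\sigma$-weakly dense \emph{countable} unital $*$-subalgebra $M_{00}\subset M$: for countable subalgebras the pointwise identities can be arranged by discarding a single null set, after which the lifting on $M_{00}$ extends to its $\sigma$-weak closure using $\sigma$-weak continuity on bounded sets (ensured by clause (c)) and the module structure over the lifted centre; this closure is all of $M$. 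Property (c) then drops out from the way $\rho$ was used to normalise $\|\ell(x)(\omega)\|$ at each step, and (d) from $\ell_{|Z}=\rho$ together with (f).
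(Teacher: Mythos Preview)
Your proposal takes a route that is genuinely different from the paper's, and the approach you sketch has real gaps that I do not see how to close.

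The paper does \emph{not} build $\ell$ by a Zorn--type extension over subalgebras of $M$ and then pass to a $\sigma$-weak closure. Instead it constructs the fibres and the lifting simultaneously, pointwise in $\omega$. Concretely: starting from a scalar lifting $\rho$ on $L^\infty(\Omega)$, one sets $\varphi_\omega(x)=\rho\bigl(\Phi(x)(1+\Phi(\id))^{-1}\bigr)(\omega)$, which for \emph{every} $\omega$ is a finite (numerical) trace on $M$. One then runs the GNS construction for the bi-trace $s_\omega(x,y)=\varphi_\omega(y^*x)$: form $\Gamma_\omega=M/I_\omega$, complete to a Hilbert space $\mathcal H(\omega)$, let $M(\omega)$ be the von Neumann algebra generated by left multiplication, and put $\tau_\omega=(1+\Phi(\id))(\omega)\mu_\omega$ with $\mu_\omega$ the natural trace. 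The map $\gamma_\omega=j_\omega\circ i_\omega\circ\pi_\omega:M\to L_1(M(\omega),\tau_\omega)$ is then, by construction, a $*$-homomorphism composed with canonical embeddings, so setting $\ell(x)(\omega)=\gamma_\omega(x)$ yields (b), (e), (f) \emph{automatically and for all $\omega$}; (c) is the computation $\|\gamma_\omega(x)\|=\tau_\omega(|\pi_\omega(x)|)=\rho(\Phi(|x|))(\omega)$; (d) is obtained from (c) and the multiplicativity of $\rho$ via simple functions; and (g) holds because $\gamma_\omega(M)$ is dense in $L_1(M(\omega),\tau_\omega)$ by the very construction of $M(\omega)$.

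The difficulties you yourself flag are, in your scheme, not merely obstacles but actual failures. First, the proposed $\sigma$-weak extension from a countable $*$-subalgebra $M_{00}$ to $M$ is not justified: scalar liftings are famously \emph{not} $\sigma$-continuous, and nothing in clause (c) provides $\sigma$-weak continuity of $\ell$ on norm-bounded sets; (c) is a pointwise norm identity, not a continuity statement. So even if you succeed on $M_{00}$, the passage to its $\sigma$-weak closure will in general destroy the pointwise identities (e), (f). Second, your argument for (g) is circular: you invoke that $\{H(x)(\omega):x\in M\}$ is dense for almost every $\omega$ and that ``the lifting only alters sections on a null set'', but the exceptional null set depends on $x$, and (g) demands density for \emph{all} $\omega\in\Omega$; the paper gets this because the fibres $L_1(M(\omega),\tau_\omega)$ are \emph{defined} so that $\gamma_\omega(M)$ is dense. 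Third, in the Zorn step the ``routine diagonalisation over a countable $*$-subalgebra of $N_0$'' cannot enforce pointwise multiplicativity of the extension against \emph{all} of $N_0$ when $N_0$ is uncountable.

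In short, the paper's GNS-at-each-point construction is what makes the ``for all $\omega$'' clauses in (c)--(g) come for free; your extension scheme naturally produces only almost-everywhere statements, and the upgrades you propose (Zorn, $\sigma$-weak closure) do not bridge that gap.
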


\begin{proof} Following \cite{GaC} for every $x\in M$ we define
$$
\Phi_0(x)=\Phi(x)(1+\Phi(\id))^{-1}.
$$
One can see that $\Phi_0$ is an $L^{\infty}(\Omega)$-valued faithful
normal trace on $M$. By $\rho$ we denote the lifting on
$L^{\infty}(\Omega)$ (see \cite{G0}). Now define a finite trace
$\varphi_\omega$ on $M$ by
$\varphi_\omega(x)=\rho(\Phi_0(x))(\omega)$, where
$\omega\in\Omega$. Due to \cite[Lemma 6.4.1]{Dik} the function
$s_\omega(x,y)=\varphi_\omega(y^*x)$ is a bi-trace on $M$, and
therefore, the equality $\langle x,y\rangle_\omega=s_\omega(x,y)$
defines a quasi-inner product on $M$.

Denote $I_\omega=\{x\in M: \ s_\omega(x,x)=0\}$. It is known that
$I_\omega$ is a two-sided ideal in $M$, therefore, one considers the
quotient space $\Gamma_\omega=M/I_\omega$, by
$\pi_\omega:M\to\Gamma_\omega$ we denote the canonical mapping. The
involution and multiplication are defined on $\G_\w$ by the usual
way, i.e. $\pi_\w(x)^*=\pi_\w(x^*)$ and
$\pi_\w(x)\cdot\pi_\w(y)=\pi_\w(xy)$. According to \cite[Proposition
6.2.3]{Dik} $\G_\w$ is a Hilbert algebra. By $\ch(\w)$ we denote the
Hilbert space which is the completion of $\G_\w$; the inner product
in $\ch(\w)$ we denote by the same symbol, i.e.
$\langle\cdot,\cdot\rangle_\w$.

The mapping $\pi_\w(x)\to \pi_\w(y)\pi_\w(x)$, $x,y\in M$, can be
extended by continuity to a bounded linear operator $T_\w(y)$ on
$\ch(\w)$. It is known \cite{Dik} that $T_\w(x)$ is a representation
of $M$ in $\ch(\w)$. Let $M(\w)$ be the von Neumann algebra
generated by $T_\w(M)$, i.e. $M(\w)=T_\w(M)''$. By $\m_\w$ we denote
the natural trace on $M(\w)$ which is defined by
$\m_\w(\pi_\w(x))=\langle\pi_\w(x)\id_\w,\id_\w\rangle_\w$ for all
$\pi_\w(x)\in\G_\w$. One can see that $\m_\w$ is a faithful, normal
and finite trace on $M(\w)$ (see \cite[Proposition 6.8.3]{Dik}). Now
let us consider a non-commutative $L_1$-space $L_1(M(\w),\t_w)$,
where $\t_\w(\cdot)=(1+\Phi(\id))(\w)\m_\w(\cdot)$. By
$i_\w:\G_\w\to M(\w)$ one denotes the canonical embedding, and
$j_\w:(M(\w),\t_w)\to L_1(M(\w),\t_\w)$ denotes the natural
embedding. Then $\g_\w=j_\w\circ i_\w\circ\pi_\w$ is a linear
mapping  from $M$ to $L_1(M(\w),\t_\w)$.

Let us define
 $$\ell(x)(\omega)=\gamma_\omega(x)$$ for any $x\in M$.

(a) Since any element $x\in M$ is identified with the element
  $\widehat{\gamma_\omega(x)}$, then one has
 $\ell(x)\in x$ (see \cite{GaC}).

 (b) The linearity  of $\ell$ is obvious.

(c) Let $x\in M$. Then
\begin{eqnarray*}
\|\ell(x)(\omega)\|_{L_1(M(\omega),\tau_\omega)}&=&\|\gamma_\omega(x)\|_{L_1(M(\omega),\tau_\omega)}=\tau_\omega(|\pi_\omega(x)|)\\
&=&\tau_\omega(\pi_\omega(|x|))=\rho(\Phi(|x|))(\omega)\\
&=&\rho(\|x\|_1)(\omega)
\end{eqnarray*}
for all  $\omega\in\Omega$.

(d) Let $\chi_A\in L^\infty(\Omega)$ and $x\in M$, then
 $\chi_A\cdot x\in M$. By $\tilde{\Sigma}$ we denote a
 complete Boolean algebra of equivalent classes w.r.t. a.e. equality, of sets from
 $\Sigma$. The lifting $\rho :
 L^\infty(\Omega)\rightarrow\mathcal{ L}^\infty(\Omega)$ induces a lifting
   $\tilde{\rho} : \tilde{\Sigma}\rightarrow\Sigma$ such that $p(\chi_A)=\chi_{\tilde{p}(A)}$.
 Due to
 \begin{eqnarray*}
 \|\pi_\omega(\chi_A x)\|_{L_1(M(\omega),\tau_\omega)} &=&  \rho(\|\chi_A \cdot
 x\|_1)(\omega)=\rho(\chi_A)(\omega)\cdot
 p(\|x\|_1)(\omega)\\
 &=& \rho(\chi_A)(\omega)\cdot\|\pi_\omega(x)\|_{L_1(M(\omega),\tau_\omega)}\\
 &=&
 \chi_{\tilde{p}(A)}(\omega)\|\pi_\omega(x)\|_{L_1(M(\omega),\tau_\omega)},
 \end{eqnarray*} we obtain
 $\pi_\omega(\chi_A\cdot x)=0$, if $\omega\in\tilde{\rho}(A)$. Let
 $\omega\in\tilde{\rho}(A)$, then
\begin{eqnarray*}
 \|\pi_\omega(\chi_A\cdot
 x)-\pi_\omega(x)\|_{L_1(M(\omega),\tau_\omega)}&=& \|\pi_\omega(\chi_{\Omega\setminus
 A}\cdot x)\|_{L_1(M(\omega),\tau_\omega)}\\
 & =&\rho(\|\chi_{\Omega\setminus A}\cdot
 x\|_1)(\omega)= \rho(\chi_{\Omega\setminus A})(\omega)\cdot
 p(\|x\|_1)(\omega)\\
 &=& \chi_{\tilde{\rho}(\Omega\setminus
 A)}(\omega)\cdot \|\pi_\omega(x)\|_{L_1(M(\omega),\tau_\omega)}=0.
 \end{eqnarray*}  Therefore, we have $\pi_\omega(\chi_A \cdot x)= \chi_{\tilde{\rho}(A)}(\omega)
 \cdot\pi_\omega(x)= p(\chi_A)(\omega)\cdot\pi_\omega(x)$ for all
 $\omega\in\Omega$.

 Let $\lambda=\sum\limits_{i=1}^n r_i\chi_{A_i}\in
 L^\infty(\Omega)$ be a simple function. Then
 \begin{eqnarray*}\pi_\omega(\lambda
 x)&=& \pi_\omega\bigg(\sum\limits_{i=1}^n r_i \chi_{A_i} x\bigg)=
 \sum\limits_{i=1}^n \pi_\omega(r_i \chi_{A_i} x)\\
 &=&
 \sum\limits_{i=1}^n r_i \rho(\chi_{A_i})(\omega)\pi_\omega(x) =
\rho(\lambda)(\omega) \pi_\omega(u).
 \end{eqnarray*}

The density argument implies that for any $\lambda\in
L^\infty(\Omega)$ there exists a sequence of simple functions
  $\{\lambda_n\}$ such that
 $\|\lambda_n-\lambda\|_{L^\infty(\Omega)}\rightarrow0$ as $n\rightarrow\infty$.
 From
 \begin{eqnarray*}
 \|\pi_\omega(\lambda_n x)-\pi_\omega(\lambda
 x)\|_{L_1(M(\omega),\tau_\omega)}&=& \|\pi_\omega((\lambda_n - \lambda)x)\|_{L_1(M(\omega),\tau_\omega)} =
\rho(\|(\lambda_n-\lambda)x\|_1)(\omega)\\[2mm]
 &=& \rho(|\lambda_n -
 \lambda|)(\omega)\cdot \rho(\|x\|_1)(\omega)\\[2mm]
 &\leq&
 \|\rho(\lambda_n - \lambda)\|_{\mathcal{ L}^\infty(\Omega)}\cdot
 \|\rho(\|x\|_1)\|_{\mathcal{ L}^\infty(\Omega)}\\[2mm]
 &=&
 \|\lambda_n-\lambda\|_{L^\infty(\Omega)}\cdot \|\|x\|_1 \|_{L^\infty(\Omega)}
 \end{eqnarray*}
 one gets
 $\pi_\omega(\lambda
 x)=\lim\limits_{n\rightarrow\infty}\pi_\omega(\lambda_n x)$ for all
 $\omega\in\Omega$. So,
 $$\pi_\omega(\lambda
 x)=\lim\limits_{n\to\infty}\pi_\omega(\lambda_n x)=
 \lim\limits_{n\rightarrow\infty} \rho(\lambda_n)(\omega)\pi_\omega(x)=
\rho(\lambda)(\omega)\pi_\omega(u)$$
 for all $\omega\in\Omega$.

 Hence,
 $$i_\omega
 (\pi_\omega(\lambda x)=
 i_\omega(\rho(\lambda)(\omega)\pi_\omega(x))=
 \rho(\lambda)(\omega)i_\omega(\pi_\omega(x))= \rho(\lambda)(\omega)
 i_\omega (\pi_\omega(x))$$ and $j_\omega(\lambda x))=\rho(\lambda)(\omega)
 j_\omega (x)$ for all $\omega\in\Omega$. These mean that  $\ell(\lambda x)=\rho(\lambda)\ell(x)$
 for any $x\in M$ and $\lambda\in
 L^\infty(\Omega)$.

(e) According to  $\gamma_\omega(x^*)=\gamma_\omega(x)^*$ for any
$x\in M$ we get $\ell(x^*)=\ell(x)^*$.

 (f) From
$\gamma_\omega(xy)=\gamma_\omega(x)\gamma_\omega(y)$ for any $x,y\in
M$ it follows that  $\ell(xy)=\ell(x)\ell(y)$ for any $x,y\in M$.

(g) By the construction of $\gamma_\omega$ the set
$\{\gamma_\omega(x): x\in M\}$ is dense in
$L_1(M(\omega),\tau_\omega)$ for any $\omega\in\Omega$. Therefore,
the set $\{\ell(x)(\omega) : x\in M\}$ is dense
  in $L_1(M(\omega),\tau_\omega)$ for all
 $\omega\in\Omega$.

The proof is complete. \end{proof}

\begin{defn}
The defined map $\ell$ in Theorem \ref{1.3} is called \textit{a
noncommutative vector-valued lifting associated with the lifting
$\rho$}.
\end{defn}

\section{Vector valued analogous of the noncommutative zero-two law}

In this section, we are going to prove a vector-valued analogous of
Theorem \ref{1-2}.

 Let $M$ be any finite von Neumann algebra, and $L_1(M,\Phi)$ be the noncommutative $L_1$-space, associated with $M$ and the center valued trace  $\Phi$.
 Let   $(X,L)$ be a measurable bundle of noncommutative  $L_1$-spaces
$L_1(M(\omega),\tau_\omega)$, associated with finite  von Neumann
algebras $M(\omega)$ and with strictly normal numerical traces
$\tau_\omega$ on
 $M(\omega)$, corresponding to $L_1(M,\Phi)$.

 \begin{thm}\label{Tw-1}
 Let $T:L_1(M,\Phi)\rightarrow L_1(M,\Phi)$ be a
 positive contraction  with $T(\mathbf{1})\leq\mathbf{1}$. Then
 there exists a measurable bundle of positive contractions
 $T_\omega:L_1(M(\omega),\tau_\omega)\rightarrow
 L_1(M(\omega),\tau_\omega)$ such that
 $$T_\omega(x(\omega))=(Tx)(\omega)$$ for all
 $x\in L_1(M,\Phi)$ and for almost all
 $\omega\in\Omega$, and
 $$\|T\|(\omega)=\|T_\omega\|_{L_1(M(\omega),\tau_\omega)\rightarrow
 L_1(M(\omega)}$$
\end{thm}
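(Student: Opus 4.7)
The plan is to construct each $T_\omega$ on a dense subset via the noncommutative vector-valued lifting $\ell$ of Theorem~\ref{1.3}, and then extend by continuity. Since $T$ is positive on $L_1(M,\Phi)$ and $T(\id)\leq\id$, for any $x\in M_{sa}$ the inequalities $-\|x\|_M\id\leq x\leq\|x\|_M\id$ give $|Tx|\leq\|x\|_M T(\id)\leq\|x\|_M\id$, so $T$ restricts to a map $T:M\to M$. By property (g) of Theorem~\ref{1.3} the set $\{\ell(x)(\omega):x\in M\}$ is dense in $L_1(M(\omega),\tau_\omega)$ for every $\omega\in\Omega$, so I would set
$$T_\omega(\ell(x)(\omega)):=\ell(Tx)(\omega),\qquad x\in M,$$
and verify that this extends to a positive contraction on the whole stalk.

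For well-definedness and contractivity, property (c) of Theorem~\ref{1.3} together with the contractivity of $T$ and monotonicity of the lifting $\rho$ yield
$$\|\ell(Tx)(\omega)\|_{L_1(M(\omega),\tau_\omega)}=\rho(\|Tx\|_{1,\Phi})(\omega)\leq\rho(\|x\|_{1,\Phi})(\omega)=\|\ell(x)(\omega)\|_{L_1(M(\omega),\tau_\omega)},$$
so if $\ell(x)(\omega)=\ell(y)(\omega)$ linearity combined with this estimate forces $\ell(Tx)(\omega)=\ell(Ty)(\omega)$, and the same inequality shows that $T_\omega$ is a contraction on the dense subspace. Positivity follows because any $x\in M_+$ can be written $x=y^*y$, whence properties (e) and (f) give $\ell(x)(\omega)=\ell(y)(\omega)^*\ell(y)(\omega)\geq 0$; since $T$ sends $M_+$ into $M_+$ (by the first paragraph), the dense positive cone is mapped into the positive cone, and the norm-continuous extension $T_\omega:L_1(M(\omega),\tau_\omega)\to L_1(M(\omega),\tau_\omega)$ inherits positivity.

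Measurability of $\{T_\omega\}$ as a bundle of operators is immediate: for every $x\in M$ the section $\omega\mapsto T_\omega(\ell(x)(\omega))=\ell(Tx)(\omega)$ lies in $\mathcal{L}^\infty(\Omega,X)$, and a step-section/density argument transfers measurability to arbitrary measurable sections. To obtain the intertwining identity $T_\omega(x(\omega))=(Tx)(\omega)$ for general $x\in L_1(M,\Phi)$, I would approximate $x$ by a sequence $x_n\in M$ with $\|x_n-x\|_{1,\Phi}\to 0$ in $L_0(\Omega)$, pass to a subsequence so that convergence holds pointwise a.e.\ in the stalk norms, and invoke continuity of $T_\omega$ on both sides. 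The norm equality $\|T\|(\omega)=\|T_\omega\|$ is then established in two steps: the estimate above, after applying $\rho$ to the $L^\infty(\Omega)$-valued operator norm of $T$, yields $\|T_\omega\|\leq\|T\|(\omega)$ a.e., and the reverse inequality comes from selecting near-extremal $x\in M$ via density and the definition of the vector-valued operator norm. The main obstacle I expect is precisely this last step: carefully matching the $L^\infty(\Omega)$-valued operator norm of $T$ with the pointwise operator norms of the stalks, which requires choosing a lifting-compatible representative of $\|T\|$ and an exhausting family of approximating witnesses measurable in $\omega$.
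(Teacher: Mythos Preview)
Your proposal is correct and follows essentially the same route as the paper: restrict $T$ to $M$, define $T_\omega$ on the dense range of the lifting by $\ell(x)(\omega)\mapsto\ell(Tx)(\omega)$, verify well-definedness, contractivity, positivity, and measurability exactly as you outline, and then extend by continuity. For the intertwining identity on all of $L_1(M,\Phi)$ the paper simply refers to the argument of \cite[Theorem~4.5]{GM1}, which is precisely your approximation-by-$M$ scheme.

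The one place where the paper is sharper than your sketch is the reverse norm inequality $\|T\|(\omega)\leq\|T_\omega\|$, which you correctly flag as the main obstacle. Rather than assembling an $\omega$-measurable family of near-extremal witnesses, the paper invokes \cite[Proposition~2]{GK}: for every $\varepsilon>0$ there exists a \emph{single} $x\in L_1(M,\Phi)$ with $\|x\|_{1,\Phi}=\mathbf{1}$ and $\|Tx\|_{1,\Phi}\geq\|T\|-\varepsilon\mathbf{1}$. Applying $\rho$ to this vector inequality and using property~(c) of the lifting gives $\rho(\|T\|)(\omega)-\varepsilon\leq\|T_\omega\|$ for \emph{all} $\omega$, bypassing any measurable-selection issues. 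Your approach would also work but requires a separate selection argument; the cited Banach--Steinhaus-type result for Banach--Kantorovich spaces makes this step immediate.
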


 \begin{proof} Let $x\in M_{sa}$. Then
 $$
 |Tx|\leq T(|x|)\leq \|x\|_M T(\mathbf{1})\leq \|x\|_M
 \mathbf{1},
 $$ i.e. $Tx\in M.$  If $x\in M,$ there exist $y,z\in M_{sa}$ such
 that $x=y+iz.$ Then $Tx=Ty+iTz.$ As $Ty, Tz\in M$ we have $Tx\in M.$

 Let $\ell: M(\subset L^\infty(\Omega,X))\rightarrow
\mathcal{L}^\infty(\Omega,X)$ be the noncommutative vector-valued
lifting associated with lifting $p$.

We define the linear operator $\varphi_\omega$ from
$\{\ell(x)(\omega): x \in M\}$ into $L_1(M(\omega),\tau_\omega)$
by
$$\varphi_\omega(\ell(x)(\omega))=\ell(Tx)(\omega)$$
The contractivity of $T$ implies that
\begin{eqnarray*}
\|\varphi_\omega(\ell(x)(\omega))\|_{L_1(M(\omega),\tau_\omega)}&=&\|\ell(Tx)(\omega)\|_{L_1(M(\omega),\tau_\omega)}=\rho(\|Tx\|_1)(\omega)\\[2mm]
&\leq&
\rho(\|x\|_1)(\omega)=\|\ell(x)(\omega)\|_{L_1(M(\omega),\tau_\omega)}.
\end{eqnarray*}
This means that $\varphi_\omega$ is bounded and well defined.
Moreover, one has
$$\|\varphi_\omega\|_{L_1(M(\omega),\tau_\omega)\rightarrow
 L_1(M(\omega,\tau_\omega)}\leq 1.$$ The positivity of $T$ yields that $\varphi_\omega$  is positive as well.

Since the set $\{\ell(x)(\omega): x \in M\}$ is dense in
$L_1(M(\omega),\tau_\omega)$, then one can extend  $\varphi_\omega$
by continuity to a linear positive contraction
$T_\omega:L_1(M(\omega),\tau_\omega)\rightarrow
 L_1(M(\omega),\tau_\omega)$ by $T_\omega(x(\omega))=\lim\limits_{n\rightarrow\infty}\varphi_\omega(\ell(x_n)(\omega)).$

From $\varphi_{\omega}(\ell(x)(\omega))\in \mathcal
L^{\infty}(\Omega,X),$ for any $x\in M$ we obtain
$T_{\omega}(x(\omega))\in M(\Omega,X)$ for any  $x\in M(\Omega,X).$
Therefore, $\{T_{\omega}\}$ is a measurable bundle of positive
operators.

Using the same argument as in the proof Theorem 4.5 \cite{GM1} one
can prove
 $$T_\omega(x(\omega))=(Tx)(\omega)$$ for all
 $x\in L_1(M,\Phi)$ and for almost all
 $\omega\in\Omega$.

 Now let us establish $\|T\|(\omega)=\|T_\omega\|_{L_1(M(\omega),\tau_\omega)\rightarrow
 L_1(M(\omega)}$.

 Let $x\in M$. Then
\begin{eqnarray*}
\|\varphi_\omega(\ell(x)(\omega))\|_{L_1(M(\omega),\tau_\omega)}&=&\|\ell(Tx)(\omega)\|_{L_1(M(\omega),\tau_\omega)}=\rho(\|Tx\|_1)(\omega)\\[2mm]
&\leq&\rho(\|T\|\|x\|_1)(\omega)=\rho(\|T\|)(\omega)p(\|x\|_1)(\omega)\\[2mm]
&=&\rho(\|T\|)(\omega)\|\ell(x)(\omega)\|_{L_1(M(\omega),\tau_\omega)}.
\end{eqnarray*}

If $x(\omega)\in L_1(M(\omega),\tau_\omega)$, then one finds
\begin{eqnarray*}
\|T_\omega
x(\omega))\|_{L_1(M(\omega),\tau_\omega)}&=&\lim\limits_{n\rightarrow\infty}\|\varphi_\omega(\ell(x_n)(\omega))\|_{L_1(M(\omega),\tau_\omega)}\\[2mm]
&\leq&\rho(\|T\|)(\omega)\lim\limits_{n\rightarrow\infty}\|\ell(x_n)(\omega)\|_{L_1(M(\omega),\tau_\omega)}\\[2mm]
&=&\rho(\|T\|)(\omega)\|x(\omega)\|_{L_1(M(\omega),\tau_\omega)}.
\end{eqnarray*}

Hence, $\|T_\omega\|_{L_1(M(\omega),\tau_\omega)\rightarrow
 L_1(M(\omega)}\leq
\rho(\|T\|)(\omega).$

By  \cite[Proposition 2]{GK} for any $\varepsilon>0$ there exists
$x\in L_1(M,\Phi)$ with $\|x\|_1=\mathbf{1}$ such that
$$\|Tx\|_1\geq \|T\|-\varepsilon\mathbf{1}.$$
Then
\begin{eqnarray*}
\rho(\|T\|)(\omega)-\varepsilon&\leq&
\rho(\|Tx\|_1)(\omega)=\|\ell(Tx)(\omega)\|_{L_1(M(\omega),\tau_\omega)}\\[2mm]
&=&\|T_\omega\ell(x)(\omega)\|_{L_1(M(\omega),\tau_\omega)}\\[2mm]
&\leq& \|T_\omega\|_{L_1(M(\omega),\tau_\omega)\rightarrow
 L_1(M(\omega,\tau_\omega)}\|\ell(x)(\omega)\|_{L_1(M(\omega),\tau_\omega)}\\[2mm]
 &=&\|T_\omega\|_{L_1(M(\omega),\tau_\omega)\rightarrow
 L_1(M(\omega)}p(\|x\|_1)(\omega)\\[2mm]
 &=&\|T_\omega\|_{L_1(M(\omega),\tau_\omega)\rightarrow
 L_1(M(\omega,\tau_\omega)}.
 \end{eqnarray*}

The arbitrariness of  $\varepsilon$ yields

 $$p(\|T\|)(\omega)\leq \|T_\omega\|_{L_1(M(\omega),\tau_\omega)\rightarrow
 L_1(M(\omega,\tau_\omega)}.$$

 Hence $$p(\|T\|)(\omega)= \|T_\omega\|_{L_1(M(\omega),\tau_\omega)\rightarrow
 L_1(M(\omega,\tau_\omega)}$$ for all $\omega\in\Omega$  or
 equivalently we have
$$\|T\|(\omega)= \|T_\omega\|_{L_1(M(\omega),\tau_\omega)\rightarrow
 L_1(M(\omega,\tau_\omega)}$$
for almost all
 $\omega\in\Omega$. This completes the proof.
\end{proof}

\begin{cor}\label{Tw-2} Let $T:L_1(M,\Phi)\rightarrow L_1(M,\Phi)$ be
 a positive contraction with $T(\mathbf{1})=\mathbf{1}$. Then
 there exists a measurable bundle of positive contractions
 $T_\omega:L_1(M(\omega),\tau_\omega)\rightarrow
 L_1(M(\omega),\tau_\omega)$ such that
 $$T_\omega(x(\omega))=(Tx)(\omega)$$ for all
 $x\in L_1(M,\Phi)$ and for almost all
 $\omega\in\Omega$ and
 $$\|T\|(\omega)=\|T_\omega\|_{L_1(M(\omega),\tau_\omega)\rightarrow
 L_1(M(\omega)}$$
\end{cor}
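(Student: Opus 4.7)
The plan is straightforward: the hypothesis $T(\mathbf{1})=\mathbf{1}$ is strictly stronger than the hypothesis $T(\mathbf{1})\leq\mathbf{1}$ of Theorem \ref{Tw-1}, so Theorem \ref{Tw-1} applies directly and delivers both conclusions asked for in the corollary, namely the measurable bundle $\{T_\omega\}$ of positive contractions realizing $T_\omega(x(\omega))=(Tx)(\omega)$ for almost every $\omega$, together with the norm equality $\|T\|(\omega)=\|T_\omega\|$. So at the level of what is formally asserted, no new work is required.

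What is however worth recording explicitly, because it will be the key feature used later when the stalkwise zero-two law of Theorem \ref{1-2} is invoked, is that under the stronger hypothesis each $T_\omega$ is itself unital, i.e. $T_\omega(\mathbf{1}_\omega)=\mathbf{1}_\omega$ for almost every $\omega$. To check this, I would return to the construction of $T_\omega$ given in the proof of Theorem \ref{Tw-1}, where $T_\omega$ was obtained by continuous extension of the map $\varphi_\omega:\ell(x)(\omega)\mapsto \ell(Tx)(\omega)$ defined on the dense set $\{\ell(x)(\omega):x\in M\}\subset L_1(M(\omega),\tau_\omega)$.

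Specializing the definition to $x=\mathbf{1}\in M$ and using property (c) of Theorem \ref{1.2}, which says $H(\mathbf{1})(\omega)=\mathbf{1}_\omega$, together with part (a) of Theorem \ref{1.3} identifying $\ell(\mathbf{1})$ with the representative of $\mathbf{1}$, one obtains $\ell(\mathbf{1})(\omega)=\mathbf{1}_\omega$ for every $\omega\in\Omega$. The hypothesis $T(\mathbf{1})=\mathbf{1}$ then gives
\[
T_\omega(\mathbf{1}_\omega)=\varphi_\omega(\ell(\mathbf{1})(\omega))=\ell(T\mathbf{1})(\omega)=\ell(\mathbf{1})(\omega)=\mathbf{1}_\omega
\]
for almost every $\omega$, which is the unitality property.

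Since every nontrivial ingredient (the existence of the lifting, the measurability of the bundle, the norm identity via the duality argument of \cite{GK}) has already been established in Theorem \ref{Tw-1}, there is no real obstacle here; the corollary is essentially a labeling of the unital case for subsequent reference. The only point that must not be skipped is the verification, via the lifting identity $\ell(\mathbf{1})(\omega)=\mathbf{1}_\omega$, that unitality of $T$ is inherited stalkwise by each $T_\omega$, since this is precisely what allows Theorem \ref{1-2} to be applied fiberwise in the final vector-valued zero-two law.
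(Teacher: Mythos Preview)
Your core argument is correct and matches the paper exactly: the corollary is stated there without proof, since $T(\mathbf{1})=\mathbf{1}$ trivially implies $T(\mathbf{1})\leq\mathbf{1}$ and Theorem~\ref{Tw-1} then gives everything in the statement.

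One remark on the additional material you record ``for later use'': the paper's subsequent definition of \emph{unital positive contraction} requires $T_\omega^*(\mathbf{1}_\omega)=\mathbf{1}_\omega$ (the adjoint condition, matching the notion used in Theorem~\ref{1-2} and Corollary~\ref{1-3}), and it is this condition---not $T_\omega(\mathbf{1}_\omega)=\mathbf{1}_\omega$---that is invoked in the proof of the final vector-valued theorem. Your lifting computation is correct as far as it goes, but it verifies the wrong stalkwise identity for the downstream application; in the paper the adjoint unitality is simply taken as an additional hypothesis rather than derived from $T(\mathbf{1})=\mathbf{1}$.
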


Let $T:L_1(M,\Phi)\rightarrow L_1(M,\Phi)$ be a positive contraction
with $T(\mathbf{1})=\mathbf{1}$ and
$T_\omega:L_1(M(\omega),\tau_\omega)\rightarrow
 L_1(M(\omega),\tau_\omega)$  ba a measurable bundle of positive
 contractions. Then $T$ is called \textit{unital positive contraction},
 if one has
 $T_\omega^*(\mathbf{1}_\omega)=\mathbf{1}_\omega$ for almost all
 $\omega\in\Omega$.

\begin{thm} Assume that $T:L_1(M,\Phi)\rightarrow L_1(M,\Phi)$
be a unital positive contraction. If there are $m,k\in
\mathbb{N}_0=\mathbb{N}\cup\{0\}$ and a positive contraction
$S:L_1(M,\Phi)\rightarrow L_1(M,\Phi)$ with
$S(\mathbf{1})=\mathbf{1}$ such that
$$T^{m+k}\geq S,  T^m\geq S \ \ \textrm{with}$$
$$\|T^{m+k}-S\|<\mathbf{1},  \|T^{m}-S\|<\mathbf{1}$$
then
$$(o)-\lim\limits_{n\rightarrow\infty}\|T^{n+k}-T^n\|=0.$$
\end{thm}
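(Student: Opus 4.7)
The plan is to reduce the theorem to the scalar noncommutative zero-two law (Corollary \ref{1-3}) by passing to stalks via the measurable bundle representation developed in Theorem \ref{Tw-1} and Corollary \ref{Tw-2}. First, I would apply Corollary \ref{Tw-2} to produce measurable bundles $\{T_\omega\}$ and $\{S_\omega\}$ of positive contractions on the stalks $L_1(M(\omega),\tau_\omega)$ representing $T$ and $S$. Since the bundle map is linear (and multiplicative on the image of the lifting $\ell$ from Theorem \ref{1.3}), one has $(T^n)_\omega=T_\omega^n$ a.e., and the difference $T^{m+k}-S$ is represented stalk-wise by $T_\omega^{m+k}-S_\omega$. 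The hypothesis that $T$ is a unital positive contraction (in the sense introduced just before the theorem) gives $T_\omega^*(\mathbf{1}_\omega)=\mathbf{1}_\omega$ a.e., and the same holds for $S_\omega$, placing the stalk-wise operators in the class handled by the scalar theory.

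Second, using the norm identity $\|T^{m+k}-S\|(\omega)=\|T_\omega^{m+k}-S_\omega\|$ (an a.e.\ identity inherited from Theorem \ref{Tw-1} applied to the positive contraction $T^{m+k}-S$), the four hypotheses $T^{m+k}\geq S$, $T^m\geq S$, $\|T^{m+k}-S\|<\mathbf{1}$, $\|T^m-S\|<\mathbf{1}$ translate into their stalk-wise analogues: for almost every $\omega\in\Omega$, one has $T_\omega^{m+k}\geq S_\omega$, $T_\omega^m\geq S_\omega$, $\|T_\omega^{m+k}-S_\omega\|<1$, and $\|T_\omega^m-S_\omega\|<1$. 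With these in hand, Corollary \ref{1-3} in the one-parameter case ($d=1$, $\mb=m$, $\kb=k$) applies on each stalk, yielding
$$
\lim_{n\to\infty}\|T_\omega^{n+k}-T_\omega^n\|=0 \qquad \text{for a.e.\ }\omega\in\Omega,
$$
which by the norm identity for $T^{n+k}-T^n$ is the pointwise a.e.\ statement $\|T^{n+k}-T^n\|(\omega)\to 0$.

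Third, I would upgrade this pointwise a.e.\ convergence of the $L^\infty(\Omega)$-valued norms to $(o)$-convergence. Since $T$ is a contraction, $\|T^{n+k}-T^n\|\leq 2\mathbf{1}$ uniformly in $n$; setting $g_n:=\sup_{r\geq n}\|T^{r+k}-T^r\|\in L^\infty(\Omega)$, which exists by Dedekind completeness of $L^\infty(\Omega)$, the sequence $\{g_n\}$ is decreasing, uniformly bounded by $2\mathbf{1}$, and satisfies $g_n(\omega)\downarrow 0$ a.e. Since in $L^\infty(\Omega)$ a bounded decreasing sequence tending to $0$ a.e.\ is exactly an $(o)$-null sequence, we conclude $g_n\downarrow 0$ in $(o)$, hence $(o)$-$\lim_n\|T^{n+k}-T^n\|=0$.

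The main technical obstacle I anticipate is justifying rigorously that the bundle representation respects differences of positive contractions, i.e.\ the a.e.\ identity $\|T^{m+k}-S\|(\omega)=\|T_\omega^{m+k}-S_\omega\|$. Theorem \ref{Tw-1} is formulated for a single positive contraction with $T(\mathbf{1})\leq\mathbf{1}$; here one needs linearity of the bundle construction applied to the positive contraction $T^{m+k}-S$, which ultimately rests on the linearity properties of the vector-valued lifting $\ell$ (Theorem \ref{1.3}(b)) and a routine density argument, but must be carried out with care. The remainder of the proof is mechanical transfer of the scalar conclusion through the stalk-wise decomposition.
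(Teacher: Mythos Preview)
Your proposal is correct and follows essentially the same approach as the paper: pass to stalks via Corollary~\ref{Tw-2}, verify the hypotheses of Corollary~\ref{1-3} pointwise a.e., and then use the norm identity $\|T^{n+k}-T^n\|(\omega)=\|T_\omega^{n+k}-T_\omega^n\|$ to conclude. The paper's proof is terser---it simply asserts $(o)$-convergence directly from a.e.\ convergence---whereas you supply the dominated-decreasing argument explicitly and flag the norm-identity step as the one needing care, but the underlying route is identical.
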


\begin{proof}
By Corollary \ref{Tw-2} there exists
$T_\omega:L_1(M(\omega),\tau_\omega)\rightarrow
 L_1(M(\omega,\tau_\omega)$ and $S_\omega:L_1(M(\omega),\tau_\omega)\rightarrow
 L_1(M(\omega,\tau_\omega)$ such that $T_\omega(x(\omega))=(Tx)(\omega)$ and $S_\omega(x(\omega))=(Sx)(\omega)$ for all
 $x\in L_1(M,\Phi)$ and for almost all
 $\omega\in\Omega$.

 From  $T^{m+k}\geq S,  T^m\geq S$ we get $T_\omega^{m+k}\geq S_\omega,  T_\omega^m\geq
 S_\omega$ for almost all
 $\omega\in\Omega$. Since $\|T^{m+k}-S\|<\mathbf{1},
 \|T^{m}-S\|<\mathbf{1}$ one finds  $$\|T_\omega^{m+k}-S_\omega\|_{L_1(M(\omega),\tau_\omega)\rightarrow
 L_1(M(\omega),\tau_\omega)}<1,  \ \  \|T_\omega^{m}-S_\omega\|_{L_1(M(\omega),\tau_\omega)\rightarrow
 L_1(M(\omega),\tau_\omega)}<1$$ for almost all
 $\omega\in\Omega$. Then using $T_\omega^*(\mathbf{1}_\omega)=\mathbf{1}_\omega$
 we obtain that the measurable bundle of positive contractions
 $T_\omega$ satisfies all conditions Corollary \ref{1-3}  for almost all
 $\omega\in\Omega$. Therefore $$\lim\limits_{n\rightarrow\infty}\|T_\omega^{n+k}-T^n_\omega\|_{L_1(M(\omega),\tau_\omega)\rightarrow
 L_1(M(\omega),\tau_\omega)}=0$$ for almost all
 $\omega\in\Omega$.

 According to
 $$\|T^{n+k}-T^n\|(\omega)=\|T_\omega^{n+k}-T^n_\omega\|_{L_1(M(\omega),\tau_\omega)\rightarrow
 L_1(M(\omega),\tau_\omega)}, \ \ \textrm{a.e.}
 $$ we obtain
 $\lim\limits_{n\rightarrow\infty}\|T^{n+k}-T^n\|(\omega)=0$ for almost all
 $\omega\in\Omega$, which means
$$(o)-\lim\limits_{n\rightarrow\infty}\|T^{n+k}-T^n\|=0.$$
\end{proof}

\section*{Acknowledgement} The first  author  acknowledges  the MOE Grant FRGS13-071-0312.

\end{document}